\providecommand{\tabularnewline}{\\}
\DeclareRobustCommand{\lyxsout}[1]{\ifx\\#1\else\sout{#1}\fi}
\theoremstyle{plain}
\newtheorem{thm}{\protect\theoremname}
\theoremstyle{plain}
\newtheorem{cor}{\protect\corollaryname}
\providecommand{\corollaryname}{Corollary}
\providecommand{\theoremname}{Theorem}
\begin{document}
\title{D-optimal designs for the Mitscherlich non-linear regression function}

\author[1]{\small Maliheh Heidari}
\author[1]{\small Md Abu Manju}
\author[2]{\small Pieta C. IJzerman-Boon}
\author[1]{\small Edwin R. van den Heuvel}

\affil[1]{\footnotesize Department of Mathematics and Computer Science, Eindhoven University of Technology, 5600 MB Eindhoven, The Netherlands}
\affil[2]{\footnotesize Center for Mathematical Sciences, MSD, 5342 CC Oss, The Netherlands}

\maketitle
\begin{abstract}
Mitscherlich's function is a well-known three-parameter non-linear
regression function that quantifies the relation between a stimulus
or a time variable and a response. Optimal designs for this function
have been constructed only for normally distributed responses with
homoscedastic variances. In this paper, we construct D-optimal designs
for discrete and continuous responses having their distribution function
in the exponential family. We also demonstrate the connection with
D-optimality for weighted linear regression.
\end{abstract}
\textbf{Keywords:} exponential family, generalized non-linear models,
weighted least squares.

\section{Introduction\label{sec:intro}}

In different fields of science (e.g., chemistry, biology, medicine,
and pharmacology) the relation between a stimulus or a time variable
($x$) and a response variable $(y$) is being studied. For instance,
the three-parameter Michaelis-Menten curve $\mathbb{E}(y|x)=\beta_{1}+\beta_{2}x/[\beta_{3}+x${]}
is frequently used for chemical and biological applications \cite{key-18,key-8,key-30},
the four-parameter logistic growth curve $\mathbb{E}(y|x)=\beta_{1}+(\beta_{4}-\beta_{1})[1+(x/\beta_{2})^{\beta_{3}}]^{-1}$
is typically used for biological assays \cite{key-3,key-10}, the
four-parameter non-linear exponential growth or decay curve $\mathbb{E}(y|x)=\beta_{1}+\beta_{2}x+\beta_{3}\exp\{\beta_{4}x\}$
is used in biology and medical sciences \cite{key-4,key-5}, and the
three-parameter one-compartmental model $\mathbb{E}(y|x)=\beta_{1}[\exp\{-\beta_{2}x\}-\exp\{-\beta_{3}x\}]/[\beta_{3}-\beta_{2}]$
is often used in pharmacokinetics \cite{key-9,key-2,key-11}. These
relations are all non-linear in both the stimulus or time variable
and the model parameters.

Precise estimation of non-linear models may require a substantial
amount of testing. Designing optimal experiments may therefore help
reduce testing and possibly reduce also other resources (e.g., time,
costs). A parameter estimation criterion for optimal designs is D-optimality
\cite{key-13}, which maximizes the determinant of $\boldsymbol{X}^{T}\boldsymbol{X}$
for linear regression functions, with $\boldsymbol{X}$ the design
matrix. For non-linear functions D-optimality is obtained by maximizing
the determinant of the Fisher information matrix \cite{key-13}. D-optimal
designs have been studied for different types of non-linear functions
for both continuous and count responses.

Under assumption of normality, $y_{i}=\mathbb{E}(y_{i}|x_{i})+\varepsilon_{i}$,
with $\varepsilon_{i}\sim\mathcal{N}(0,\sigma^{2})$ i.i.d., \cite{key-8}
provided D-optimal designs for the two-parameter ($\beta_{1}=0$)
Michaelis-Menten curve, while \cite{key-30} discussed D-optimal designs
for this two-parameter Michaelis-Menten curve under heteroscedastic
residual errors, i.e., $\varepsilon_{i}\sim\mathcal{N}(0,\nu(\mathbb{E}(y_{i}|x_{i})))$,
with $\nu$ a known function. In \cite{key-7}, a D-optimal design
for the three ($\beta_{1}=0$) and the four parameter logistic growth
curve was provided, respectively. In \cite{key-6}, this work on logistic
curves was extended to heteroscedastic residuals, i.e., $\varepsilon_{i}\sim\mathcal{N}(0,\sigma^{2}\mathbb{E}(y_{i}|x_{i})[1-\mathbb{E}(y_{i}|x_{i})])$,
when $\beta_{1}=0$ and $\beta_{4}=1$ holds. Under the same parameter
restrictions, \cite{key-6} also provided D-optimal designs for the
asymmetric logistic growth curve, i.e., $\mathbb{E}(y_{i}|x_{i})=[1+(x/\beta_{2})^{\beta_{3}}]^{-r}$,
with $r>0$. In \cite{key-10}, D-optimal designs for the full four
and five parameter logistic growth curve $\mathbb{E}(y_{i}|x_{i})=\beta_{1}+(\beta_{4}-\beta_{1})[1+(x/\beta_{2})^{\beta_{3}}]^{-r}$
were studied with residuals having a heteroscedastic variance of the
form $[\mathbb{E}(y_{i}|x_{i})]^{\gamma}$, with $\gamma>0$. D-optimal
designs for the two-parameter (where $\beta_{2}=0$ and $\beta_{3}=1$
and where $\beta_{1}=\beta_{2}=0$) and three-parameter ($\beta_{2}=0$)
exponential decay model were provided by \cite{key-5} under the assumption
of homoscedastic residuals, while \cite{key-4} provided a D-optimal
design for the full four-parameter exponential growth model (among
others), also under homoscedastic residuals. Finally, \cite{key-21}
discussed D-optimal designs for the three-parameter one-compartmental
model under homoscedastic residuals, while \cite{key-10} studied
D-optimality of this compartmental model under heteroscedastic residuals
using $[\mathbb{E}(y_{i}|x_{i})]^{\gamma}$ (again).

For count responses $y_{i}$, the Poisson, Binomial, and Negative
Binomial distributions have been used frequently \cite{key-12,key-15,key-16,key-17,key-19},
but these papers discuss optimal designs for forms of $\mathbb{E}(y_{i}|x_{i})$
that can be rewritten into a linear function in the parameters, i.e.,
satisfying the definition of generalized linear models \cite{key-14}.
Interestingly though, \cite{key-19} provided D-optimal designs for
the class of generalized linear models with distributions in the exponential
family a few years earlier. Contrary to the work on generalized linear
models, \cite{key-18} discussed D-optimal designs for mixed effects
Poisson regression with the full three-parameter Michaelis-Menten
curve. The random part only affected the constant or intercept $\beta_{1}$
and they also discussed designs without this random component.

One specific or special non-linear regression function is the three-parameter
Mitscherlich function \cite{key-1}, given by $\mathbb{E}(y|x)=\beta_{1}+\beta_{2}\exp\{\beta_{3}x\}$,
with $\beta_{1}\in\mathbb{R}$, $\beta_{2}\neq0,\beta_{3}\neq0$,
and with $x$ the stimulus or the logarithmically transformed stimulus
variable. Note that the original formulation of the Mitscherlich function
in \cite{key-31} assumed that the parameters $\beta_{2}$ and $\beta_{3}$
were both negative. In some areas \cite{key-5,key-41}, the Mitscherlich
function is referred to as the three parameter decay model when the
variable $x$ is time. In that case the parameter $\beta_{3}$ is
typically considered negative. The reason that the Mitscherlich function
is special, is that it can be naturally used to investigate violations
of \textit{linearity} of a measurement system in different directions,
which is less obvious for the other non-linear functions just discussed.
Indeed, linearity can be obtained in two ways:
\begin{equation}
\begin{array}{rl}
\beta_{3}=1: & \mathbb{E}(y|\log(x))=\beta_{1}+\beta_{2}x,\\
\beta_{1}=0: & \log(\mathbb{E}(y|x))=\log(\beta_{2})+\beta_{3}x,
\end{array}\label{eq:linearity}
\end{equation}
with the $\log$ the natural logarithm. In case both constraints $\beta_{1}=0$
and $\beta_{3}=1$ are satisfied, the system may be referred to as
\textit{proportional} to stimulus $x$.

As far as we know, D-optimal designs for the Mitscherlich non-linear
function have only be discussed under the assumption of a normally
distributed response $y$ with homoscedastic residual variances \cite{key-1,key-5,key-41}.
Here we will generalize these D-optimal designs for estimation of
the Mitscherlich function, when the discrete or continuous distribution
function for the response $y$ is from the exponential family in its
natural form \cite{key-14,key-33}. We also consider the situation
where the dispersion parameter is not known.

The next section will introduce our generalized non-linear model,
the log-likelihood function, Fisher's information matrix, and the
D-optimality criterion. In Section 3 we will construct the D-optimal
design for the Mitscherlich non-linear function using three stimuli
levels (minimally D-optimal \cite{key-21}). We also provide examples
for the well-known distributions in the exponential family of distributions.
In Section 4 we discuss transformations of the Mitscherlich non-linear
function and how our work is connected to earlier results in literature,
showing that the D-optimal designs in \cite{key-1,key-5,key-41} are
obtained with our results. Furthermore, we will show that our D-optimal
design can be constructed from a D-optimal design for weighted linear
regression, extending \cite{key-1} directly to the distributions
in the exponential family. However, when heteroscedastic residual
variances are introduced, D-optimality can not be obtained through
weighted linear regression anymore. We finalize with Section 5 summarizing
and discussing our work.

\section{Statistical model}

Let $y_{ij}$ be response $j\in\{1,2,...,n_{i}\}$ at stimulus $x_{i}$,
$i\in\{1,2,...,m\}$, all being mutually independently distributed.
The distribution of $y_{ij}$ is an element of the exponential family
having density $f(y|\theta_{i},\phi)=\exp\{[y\theta_{i}-b(\theta_{i})]/a(\phi)+c(y,\phi)\}$,
with $y\in\mathbb{R}$, $\theta_{i}$ an unknown parameter that will
depend on stimulus $x_{i}$, $\phi$ an (un)known dispersion parameter,
and $a(\cdot)$, $b(\cdot)$ and $c(\cdot,\cdot)$ known functions
\cite{key-14}. It is assumed that the range of $y$ does not depend
on $\theta_{i}$ and $\phi$. Furthermore, function $b(\cdot)$ is
at least twice differentiable, with $b'(\cdot)$ and $b''(\cdot)$
the first and second derivative. As a consequence, we have $\mathbb{E}(y_{ij}|x_{i})\equiv\mu_{i}=b'(\theta_{i})$
and $\mathsf{VAR}(y_{ij}|x_{i})=b''(\theta_{i})a(\phi)$. Using the
canonical link function $g$, the relation between $\theta_{i}$ and
$\mu_{i}$ is given by $\theta_{i}=g(\mu_{i})$. Our model includes
the well-known distributions Poisson, Binomial, Negative Binomial,
Gaussian, Gamma, and Inverse Gaussian with their canonical link functions.
However, there exists a more general formulation of the exponential
family of distributions of the form $f(y|\theta_{i})=\exp\{T(y)\eta(\theta_{i})-A(\theta_{i})+B(y)\}$,
but we have selected its more restrictive natural form with $\theta_{i}$
the canonical parameter (when $\phi$ is known \cite{key-33}). Furthermore,
if $\phi$ is unknown our formulation may not be a two-parameter exponential
family anymore \cite{key-14}. Irrespective of its formal definition,
we will focus on densities $f(y|\theta_{i},\phi)=\exp\{[y\theta_{i}-b(\theta_{i})]/a(\phi)+c(y,\phi)\}$
where $\phi$ is allowed to be unknown.

The Mitscherlich function we will study is $\mu_{i}=\beta_{1}+\beta_{2}x_{i}^{\beta_{3}}$
with constraints $\beta_{2}>0$, $\beta_{3}>0$, and $x_{i}\geq0$
the stimulus of interest. Note that we allow a stimulus that can be
equal to zero, which was not implemented in earlier formulations.
Restrictions on parameter $\beta_{1}$ are determined by the type
of distribution for $y_{ij}$. For instance, $\beta_{1}\in\mathbb{R}$
is allowed for the normal distribution, $\beta_{1}\geq0$ is needed
for the Poisson distribution, and $\beta_{1}>0$ is required for the
Gamma distribution. Our choice for the Mitscherlich function fits
very well with measurement system analysis where we expect typically
non-negative values when we choose certain levels for the stimulus.
Thus we will assume that $\beta_{1}\geq0$.

\subsection{Maximum likelihood estimation}

If we define $\boldsymbol{y}_{i}=(y_{i1},y_{i2},...,y_{in_{i}})^{T}$,
$\boldsymbol{y}=(\boldsymbol{y}_{1},\boldsymbol{y}_{2},...,\boldsymbol{y}_{m})^{T}$,
and $\boldsymbol{\beta}=(\beta_{1},\beta_{2},\beta_{3})^{T}$, the
log-likelihood function can be written as
\begin{equation}
\begin{array}{rcl}
\ell\left(\boldsymbol{\beta},\phi|\boldsymbol{y}\right) & = & \sum\limits _{i=1}^{m}\sum\limits _{j=1}^{n_{i}}\left[(y_{ij}\theta_{i}-b(\theta_{i}))/a(\phi)+c(y_{ij},\phi)\right]\\
 & = & \tfrac{1}{a(\phi)}\sum\limits _{i=1}^{m}\left[y_{i.}g\left(\mu_{i}\right)-n_{i}b(g(\mu_{i}))\right]+\sum\limits _{i=1}^{m}\sum\limits _{j=1}^{n_{i}}c(y_{ij},\phi),
\end{array}\label{eq:likelihood}
\end{equation}
where $y_{i.}=\sum_{j=1}^{n_{i}}y_{ij}$ is the sum of the observations
at stimulus $x_{i}$. The maximum likelihood estimates (MLEs) for
the parameters $\boldsymbol{\beta}$ and $\phi$ can be obtained by
solving the following likelihood equations:
\begin{equation}
\begin{array}{l}
\ell_{\beta_{k}}^{\prime}=\frac{\partial\ell\left(\boldsymbol{\beta},\phi|\boldsymbol{y}\right)}{\partial\beta_{k}}=\tfrac{1}{a(\phi)}\sum\limits _{i=1}^{m}\left(y_{i.}-n_{i}\mu_{i}\right)g^{\prime}\left(\mu_{i}\right)\frac{\partial\mu_{i}}{\partial\beta_{k}}=0\,\,\,\forall k=1,2,3,\\
\ell_{\phi}^{\prime}=\frac{\partial\ell\left(\boldsymbol{\beta},\phi|\boldsymbol{y}\right)}{\partial\phi}=-\frac{a'\left(\phi\right)}{a^{2}\left(\phi\right)}\sum\limits _{i=1}^{m}\left[y_{i.}g(\mu_{i})-n_{i}b(g(\mu_{i}))\right]+\sum\limits _{i=1}^{m}\sum\limits _{j=1}^{n_{i}}c'(y_{ij},\phi)=0,
\end{array}\label{eq:score-functions}
\end{equation}
where $g'(\mu_{i})=\partial g(\mu_{i})/\partial\mu_{i}$, $a'(\phi)=\partial a(\phi)/\partial\phi$,
and $c^{\prime}(y_{ij},\phi)=\partial c(y_{ij},\phi)/\partial\phi$.
The $4\times4$ Fisher information matrix $I_{4\times4}(\boldsymbol{\beta},\phi)$
is obtained by the (negative) expected values of the derivatives of
the score functions in (\ref{eq:score-functions}), but they are also
equal to the variances and covariances of the score functions, Theorem
1.1, page 406 in \cite{key-32}. Using the derivatives of the score
functions and taking expectations (see Appendix A), the variances
and covariances of the score functions become
\begin{equation}
\begin{array}{l}
\mathrm{\mathsf{VAR}}(\ell_{\beta_{k}}^{\prime})=\tfrac{1}{a(\phi)}\sum\limits _{i=1}^{m}n_{i}g'(\mu_{i})\left(\frac{\partial\mu_{i}}{\partial\beta_{k}}\right)^{2},\\
\mathsf{VAR}(\ell_{\phi}^{\prime})=\sum\limits _{i=1}^{m}\sum\limits _{j=1}^{n_{i}}\left[\left(\frac{a''\left(\phi\right)}{a'\left(\phi\right)}-\frac{2a'\left(\phi\right)}{a\left(\phi\right)}\right)\mathrm{\mathbb{E}}\left(\frac{\partial c(y_{ij},\phi)}{\partial\phi}\right)-\mathbb{E}\left(\frac{\partial^{2}c(y_{ij},\phi)}{(\partial\phi)^{2}}\right)\right],\\
\mathsf{COV}(\ell_{\beta_{r}}^{\prime},\ell_{\beta_{s}}^{\prime})=\tfrac{1}{a(\phi)}\sum\limits _{i=1}^{m}n_{i}g'(\mu_{i})\left(\frac{\partial\mu_{i}}{\partial\beta_{r}}\right)\left(\frac{\partial\mu_{i}}{\partial\beta_{s}}\right),\quad r\neq s,\\
\mathsf{COV}(\ell_{\beta_{k}}^{\prime},\ell_{\phi}^{\prime})=0.
\end{array}\label{eq:Fisher-Information}
\end{equation}
Orthogonality of the score functions for the location parameters $\boldsymbol{\beta}$
and the score function for the dispersion parameter $\phi$ has been
obtained earlier \cite{key-22}.

\textbf{Remark:} The Fisher information matrix $I_{4\times4}(\boldsymbol{\beta},\phi)$
reduces to a $3\times3$ matrix $I_{3\times3}(\boldsymbol{\beta})/a(\phi)$, when $\phi$ would be
known (e.g., $\phi=1$). This matrix, $I_{3\times3}(\boldsymbol{\beta})$, is independent of $\phi$ and will be fully determined by the score functions $\ell_{\beta_{k}}^{\prime}$
in (\ref{eq:Fisher-Information}). Note that $\mathsf{COV}(\ell_{\beta_{k}}^{\prime},\ell_{\phi}^{\prime})=0$
for all $k\in\{1,2,3\}$, implies that the covariance of the MLEs
for $\beta_{k}$ and $\phi$ is zero too, but this does not necessarily
imply that the variance $\mathsf{VAR}(\hat{\boldsymbol{\beta}})$
of MLE $\hat{\boldsymbol{\beta}}$ is independent of $\phi$ or the
variance $\mathsf{VAR}(\hat{\phi})$ of MLE $\hat{\phi}$ is independent
of $\boldsymbol{\beta}$, since the corresponding elements of the
inverse Fisher information may still depend on $\phi$ or $\boldsymbol{\beta}$
through its density, respectively.

\subsection{D-optimality criterion}

D-optimality is defined by maximizing the determinant of the Fisher
information matrix $I_{4\times4}(\boldsymbol{\beta},\phi)$, see \cite{key-13,key-51}.
Due to the (asymptotic) independence of the ML estimators $\hat{\boldsymbol{\beta}}$
and $\hat{\phi}$, the determinant of the Fisher information matrix
can be rewritten as $|I_{4\times4}(\boldsymbol{\beta},\phi)|=\mathsf{VAR}(\ell_{\phi}^{\prime})|I_{3\times3}(\boldsymbol{\beta})|/a(\phi)$.
In case the variance $\mathsf{VAR}(\hat{\phi})$ of MLE $\hat{\phi}$
is independent of $\boldsymbol{\beta}$, i.e.,
\begin{equation}
\dfrac{\partial\mathsf{VAR}(\ell_{\phi}^{\prime})}{\partial\beta_{k}}=0,\quad\forall k\in\{1,2,3\},\label{eq:Independence}
\end{equation}
we can focus on determinant $|I_{3\times3}(\boldsymbol{\beta})|$,
as if the dispersion parameter $\phi$ would be known. Note that we
do not need a fourth stimulus to be able to estimate parameter $\phi$.
The reason is that the MLE of $\boldsymbol{\beta}$ can be obtained
independently of the estimation of $\phi$ because the likelihood
equations for $\boldsymbol{\beta}$ do not involve the parameter $\phi$,
see (\ref{eq:score-functions}). Additionally, $\phi$ can be estimated
from the variability in the observations $y_{ij}$ if $n>1$, since
$\mathsf{VAR}(y_{ij}|x_{i})=b''(\theta_{i})a(\phi)$ and $\theta_{i}$
can be estimated with MLE $\hat{\boldsymbol{\beta}}$ and $x_{i}$.

Condition (\ref{eq:Independence}) is satisfied for exponential families
of distributions of the form $f(y|\theta,\eta)=\exp\{\eta[y\theta_{i}-b(\theta_{i})]+d_{1}(y)+d_{2}(\eta)+\eta c(y)\}$,
where $\eta=1/a(\phi)$, since the derivative $\partial\ell_{\eta}^{\prime}/\partial\eta$
of the score function $\ell_{\eta}^{\prime}$ is independent of $y$
and $\boldsymbol{\beta}$ (see formula (1.22) of \cite{key-23} on
page 8). This condition (\ref{eq:Independence}) holds for all well-known
distribution functions that will be used in this study which are Poisson, Binomial, Negative Binomial, Gaussian,
Gamma, and inverse Gaussian (see Table 1.1 of \cite{key-23}).

We are interested in the smallest number of stimuli that would maximize
determinant $|I_{4\times4}(\boldsymbol{\beta},\phi)|$, i.e. the locally
minimal D-optimality criterion \cite{key-21}. Assuming that condition
(\ref{eq:Independence}) holds true, we can focus on only three stimuli
$x_{1}$, $x_{2}$, and $x_{3}$, since determinant $|I_{3\times3}(\boldsymbol{\beta})|$
contains only three parameters. Thus we are looking for stimuli $x_{1}$,
$x_{2}$, and $x_{3}$, with $x_{1}<x_{2}<x_{3}$, such that
\begin{equation}
\underset{L\leq x_{1}<x_{2}<x_{3}\leq U}{\mathrm{arg\:max}}|I_{3\times3}(\boldsymbol{\beta})|,
\end{equation}
with $L\geq0$ and $U<\infty$ a known lower and upper bound on the
range of stimuli, respectively, typically determined by practical
limitations. With the help of Matlab we were able to express the determinant
$|I_{3\times3}(\boldsymbol{\beta})|$ in an explicit form equal to
\begin{equation}
\beta_{2}^{2}\left[(x_{1}x_{2})^{\beta_{3}}\log(\tfrac{x_{2}}{x_{1}})-(x_{1}x_{3})^{\beta_{3}}\log(\tfrac{x_{3}}{x_{1}})+(x_{2}x_{3})^{\beta_{3}}\log(\tfrac{x_{3}}{x_{2}})\right]^{2}\prod_{i=1}^{3}[n_{i}g'(\mu_{i})].\label{eq:D}
\end{equation}
It is important to realize that the sample sizes $n_{1}$, $n_{2}$,
and $n_{3}$ do not influence the choice of stimuli $x_{1}$, $x_{2}$,
and $x_{3}$ for maximization of (\ref{eq:D}), since only the product
$n_{1}n_{2}n_{3}$ is involved in (\ref{eq:D}). Thus if the optimal
design is known and the total sample size $n=n_{1}+n_{2}+n_{3}$ is
determined, it would be best to choose the same sample size in each
stimulus to maximize precision.

\section{D-optimal designs}

Here we will focus on finding the optimal values for $x_{1}$, $x_{2}$,
and $x_{3}$ that would maximize determinant $|I_{3\times3}(\boldsymbol{\beta})|$
in (\ref{eq:D}) under constraint $L\leq x_{1}<x_{2}<x_{3}\leq U$,
with $L\geq0$ and $U<\infty$. We will see that the choice of the
three stimuli depends on the mathematical behavior of the link function
$g$. Note that our results will be D-optimal when either $\phi$
is known or otherwise when condition (\ref{eq:Independence}) is satisfied.
Our main results are formulated in the following three theorems. The
proofs are provided in Appendix B.
\begin{thm}
\label{th-x1}If $g'(\mu)\geq0$, and $g''(\mu)\leq0$ holds, then
the optimal stimulus $x_{1}^{\mathrm{opt}}$ for $x_{1}$ that maximizes
determinant $|I_{3\times3}(\boldsymbol{\beta})|$ in (\ref{eq:D}),
is the smallest possible stimulus value, i.e., $x_{1}^{\mathrm{opt}}=L$.
\end{thm}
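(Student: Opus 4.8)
The plan is to fix $x_2$ and $x_3$ with $L\le x_1<x_2<x_3\le U$ and show that the determinant in~(\ref{eq:D}), viewed as a function of $x_1$ alone, is non-increasing on $[L,x_2)$; since this holds for every admissible pair $(x_2,x_3)$, any global maximiser must have $x_1=L$. Write $h(x_1)$ for the bracketed expression in~(\ref{eq:D}) and $\mu_1=\beta_1+\beta_2x_1^{\beta_3}$, and discard the $x_1$-free factor $\beta_2^2n_1n_2n_3\,g'(\mu_2)g'(\mu_3)$ (assumed positive; if it vanishes the determinant is identically $0$ and there is nothing to prove). The determinant is then proportional to $h(x_1)^2\,g'(\mu_1)$, and it suffices to show that both of these factors are non-negative and non-increasing in $x_1$ on $[L,x_2)$, since a product of non-negative non-increasing functions is non-increasing.

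The factor $g'(\mu_1)$ is the easy half: $\beta_2,\beta_3>0$ make $\mu_1$ non-decreasing in $x_1$, so $\tfrac{d}{dx_1}g'(\mu_1)=g''(\mu_1)\,\beta_2\beta_3x_1^{\beta_3-1}\le 0$ by the hypothesis $g''\le0$, while $g'\ge0$ gives non-negativity. The factor $h(x_1)^2$ is non-negative trivially, so the real work is its monotonicity. I would rewrite $h(x_1)=x_1^{\beta_3}\bigl(A-B\log x_1\bigr)+C$ with constants $A=x_2^{\beta_3}\log x_2-x_3^{\beta_3}\log x_3$, $B=x_2^{\beta_3}-x_3^{\beta_3}<0$, and $C=(x_2x_3)^{\beta_3}\log(x_3/x_2)>0$, and then establish two facts: (i) $h(x_1)\to C>0$ as $x_1\downarrow 0$ and $h(x_2)=0$ (short algebraic checks, using $x_1^{\beta_3}\log x_1\to 0$); and (ii) $h$ is strictly decreasing on $(0,x_2)$. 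For (ii), differentiation gives $h'(x_1)=x_1^{\beta_3-1}\,\psi(\log x_1)$, where $\psi(v)=\beta_3A-B-\beta_3Bv$ is affine in $v$ with positive slope $-\beta_3B$; hence $\psi$, and with it $h'$, changes sign at most once, and only from $-$ to $+$. Substituting $v=\log x_2$ and simplifying via $r=x_3/x_2>1$, $s=\beta_3\log r>0$ reduces the inequality $\psi(\log x_2)<0$ to the elementary statement $e^{s}(1-s)<1$ for $s>0$ (equality at $s=0$; the derivative of $e^{s}(1-s)-1$ is $-se^{s}<0$). Since $\psi$ is increasing and already negative at $v=\log x_2$, it is negative on all of $(-\infty,\log x_2]$, so $h'<0$ on $(0,x_2)$. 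Combining with (i), $h>0$ and is strictly decreasing on $[0,x_2)$, whence $h^2$ is strictly decreasing there, which finishes the argument.

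The one delicate point is the sign of $\psi(\log x_2)$: a priori $h'$ could turn positive before $x_1$ reaches $x_2$, in which case $h^2$ would fail to be monotone on $[L,x_2)$ and the ``product of monotone factors'' reasoning would collapse. Reducing this to $e^{s}(1-s)<1$ is exactly what rules that out, and is the crux of the proof; everything else is bookkeeping. As a consistency check, $h(x_2)=0$ is forced because the design degenerates when $x_1=x_2$, and ``$h$ positive and strictly decreasing on $[L,x_2)$'' is compatible with this.
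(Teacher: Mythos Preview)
Your argument is correct and follows essentially the same route as the paper's proof: factor the determinant as a product of $g'(\mu_1)$ and the square of the bracketed term, use $g''\le0$ to handle the first factor, and then show the bracketed term is positive and strictly decreasing on $(0,x_2)$ by locating the sign change of its derivative beyond $x_2$ via an elementary one-variable inequality. The only cosmetic difference is that the paper first substitutes $z_i=x_i^{\beta_3}$, which strips out $\beta_3$ and leads to the equivalent inequality $a-\log a\ge1$ for $a=z_2/z_3\in(0,1)$ in place of your $e^{s}(1-s)<1$; otherwise the structure and logic coincide.
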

\begin{proof}
See Appendix B.
\end{proof}
The two conditions on the link function in Theorem \ref{th-x1} indicate
that we are studying concave increasing link functions. This is satisfied
for the identity link function $g(\mu)=\mu$ with $\mu\in\mathbb{R}$,
the log link function $g(\mu)=\log(\mu)$ with $\mu\in(0,\infty)$,
the square root link function $g(\mu)=\sqrt{\mu}$ with $\mu\in(0,\infty)$
, the negative inverse link function $g(\mu)=-\mu^{-1}$ with $\mu\in(0,\infty)$,
and the half negative inverse-square link function $g(\mu)=-0.5\mu^{-2}$
with $\mu\in(0,\infty)$. Thus for these link functions we need to
choose the first stimulus $x_{1}$ as small as possible if we want
to maximize the determinant of the Fisher information matrix. For
the logit link function $g(\mu)=\log(\mu/[N-\mu]$) with $\mu\in(0,N)$
the condition $g''(\mu)\leq0$ is only guaranteed when $\mu\leq N/2$.
Thus when $\mu>N/2$, we do not know if stimulus $x_{1}$ should be
selected as small as possible. 
\begin{thm}
\label{th-x3}If $g'(\mu)\geq0$, $g''(\mu)\leq0$, and $g''(\mu)\mu+2g'(\mu)\geq0$
holds, then the optimal stimulus $x_{3}^{\mathrm{opt}}$ for $x_{3}$
that maximizes determinant $|I_{3\times3}(\boldsymbol{\beta})|$ in
(\ref{eq:D}), is the largest possible value, i.e., $x_{3}^{\mathrm{opt}}=U$.
\end{thm}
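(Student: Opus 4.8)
The plan is to fix $x_1$ and $x_2$ with $L\le x_1<x_2<U$ and to show that the determinant (\ref{eq:D}), regarded as a function of $x_3$ on $(x_2,U]$, is strictly increasing; since this holds for every admissible pair $(x_1,x_2)$, the joint maximiser must have $x_3=U$. Write $B(x_3)=(x_1x_2)^{\beta_3}\log(x_2/x_1)-(x_1x_3)^{\beta_3}\log(x_3/x_1)+(x_2x_3)^{\beta_3}\log(x_3/x_2)$ for the bracketed factor. Among $g'(\mu_1),g'(\mu_2),g'(\mu_3)$ only $g'(\mu_3)$ depends on $x_3$ (recall $\mu_i=\beta_1+\beta_2x_i^{\beta_3}$), so it suffices to prove that $f(x_3):=B(x_3)^2g'(\mu_3)$ is strictly increasing on $(x_2,U]$; configurations with some $g'(\mu_i)=0$ make the determinant vanish and are discarded, so I take $g'(\mu_i)>0$. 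Once $f>0$ is known, I would work with the logarithmic derivative $f'/f=2B'/B+g''(\mu_3)\mu_3'/g'(\mu_3)$, where $\mu_3'=\partial\mu_3/\partial x_3=\beta_2\beta_3x_3^{\beta_3-1}>0$, and show it is positive.

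The first step is a direct differentiation of $B$ in $x_3$. After forming $x_3B'(x_3)-\beta_3B(x_3)$ the terms carrying $\log(x_3/x_1)$ and $\log(x_3/x_2)$ cancel, leaving the clean identity
\[
x_3B'(x_3)-\beta_3B(x_3)=x_3^{\beta_3}\bigl(x_2^{\beta_3}-x_1^{\beta_3}\bigr)-\beta_3(x_1x_2)^{\beta_3}\log(x_2/x_1)=:\Phi(x_3).
\]
Since $x_1<x_2$ and $\beta_3>0$, $\Phi$ is strictly increasing in $x_3$, and $\Phi(x_2)=(x_1x_2)^{\beta_3}\bigl[(x_2/x_1)^{\beta_3}-1-\beta_3\log(x_2/x_1)\bigr]>0$ because $e^{s}-1-s>0$ for $s=\beta_3\log(x_2/x_1)>0$. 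Hence $\Phi(x_3)>0$ for all $x_3\ge x_2$. As a by-product, $(B/x_3^{\beta_3})'=\Phi(x_3)/x_3^{\beta_3+1}>0$ while $B(x_2)=0$, so $B(x_3)>0$ on $(x_2,U]$, which legitimises writing $\log|B|=\log B$.

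The second step bounds the link contribution. From the hypothesis $g''(\mu)\mu+2g'(\mu)\ge0$ together with $g'>0$ and $\mu_3=\beta_1+\beta_2x_3^{\beta_3}>0$ one gets $-g''(\mu_3)/g'(\mu_3)\le2/\mu_3$; multiplying by $\mu_3'>0$ and using $x_3\mu_3'=\beta_3\beta_2x_3^{\beta_3}=\beta_3(\mu_3-\beta_1)$ with $\beta_1\ge0$ gives
\[
-\frac{g''(\mu_3)}{g'(\mu_3)}\,\mu_3'\le\frac{2\mu_3'}{\mu_3}=\frac{2\beta_3}{x_3}\cdot\frac{\mu_3-\beta_1}{\mu_3}\le\frac{2\beta_3}{x_3}.
\]
Combining the two steps, $f'(x_3)/f(x_3)=2B'(x_3)/B(x_3)+g''(\mu_3)\mu_3'/g'(\mu_3)\ge 2B'(x_3)/B(x_3)-2\beta_3/x_3=2\Phi(x_3)/(x_3B(x_3))>0$, so $f$ is strictly increasing on $(x_2,U]$ and hence maximised at $x_3=U$.

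I expect the crux to be the first step: spotting that $x_3B'-\beta_3B$ telescopes to the two-term expression $\Phi$, and then recognising that the bound $2\beta_3/x_3$ produced by the condition $g''(\mu)\mu+2g'(\mu)\ge0$ is exactly the quantity $B'/B$ must beat — in other words, that this link condition is calibrated so that the logarithmic derivative of $f$ reduces to $2\Phi/(x_3B)$. A lesser point to handle carefully is the sign of $B$ (dealt with above via the integrating factor $x_3^{-\beta_3}$) and the exclusion of degenerate configurations where some $g'(\mu_i)$ vanishes.
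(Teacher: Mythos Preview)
Your proof is correct and follows essentially the same approach as the paper's: both fix $x_1,x_2$, study $g'(\mu_3)B(x_3)^2$, and reduce positivity of its derivative to the inequality $x_3B'-\beta_3B>0$ (in the paper's $z$-variables, $z_3h'-h\ge0$, i.e.\ $A_2z_3-A_1\ge0$), invoking the condition $g''(\mu)\mu+2g'(\mu)\ge0$ together with $\beta_1\ge0$ and $g''\le0$ to control the link contribution. The only differences are cosmetic---you work in $x_3$ rather than $z_3=x_3^{\beta_3}$, organise the computation via the logarithmic derivative and the integrating factor $x_3^{-\beta_3}$, and verify $\Phi(x_2)>0$ via $e^s-1-s>0$, which is the same elementary inequality the paper uses in the form $a-a\log a\le1$.
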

\begin{proof}
See Appendix B
\end{proof}
The third condition $g''(\mu)\mu+2g'(\mu)\geq0$ in Theorem \ref{th-x3}
for link function $g$ would be satisfied for most of the link functions
(e.g., $g(\mu)=\mu$, $g(\mu)=\log(\mu)$, $g(\mu)=\sqrt{\mu}$, $g(\mu)=\log(\mu/[N-\mu])$,
and $g(\mu)=-\mu^{-1}$), but it does not hold for the canonical link
function $g(\mu)=-0.5\mu^{-2}$, with $\mu\in(0,\infty)$, for the
inverse Gaussian distribution. Recall that the canonical link function
$g(\mu)=\log(\mu/[N-\mu])$ of the Binomial distribution satisfies
condition $g''(\mu)\leq0$ only when $\mu\leq N/2$. Thus the third
stimulus $x_{3}$ should be chosen as large as possible for most canonical
link functions, but for the inverse Gaussian and Binomial distribution
with their canonical link function it may be possible to obtain better
designs when we stay away from the boundary value $U$ (see Section
\ref{subsec:Examples}).
\begin{thm}
\label{th-x2}Assume that $g'(\mu)\geq0$, and $g''(\mu)\leq0$ holds
and let $x_{1}$ and $x_{3}$ be given stimuli, then the optimal stimulus
$x_{2}^{\mathrm{opt}}$ for $x_{2}$ that maximizes determinant $|I_{3\times3}(\boldsymbol{\beta})|$
in (\ref{eq:D}), is obtained by solving the following equation
\begin{equation}
\begin{split}\beta_{2}\beta_{3}g''\left(\mu_{2}\right)\left[(x_{1}x_{2})^{\beta_{3}}\log(\tfrac{x_{2}}{x_{1}})-(x_{1}x_{3})^{\beta_{3}}\log(\tfrac{x_{3}}{x_{1}})+(x_{2}x_{3})^{\beta_{3}}\log(\tfrac{x_{3}}{x_{2}})\right]\\
+2g'\left(\mu_{2}\right)\left[\beta_{3}x_{1}^{\beta_{3}}\log(\tfrac{x_{2}}{x_{1}})+\beta_{3}x_{3}^{\beta_{3}}\log(\tfrac{x_{3}}{x_{2}})+x_{1}^{\beta_{3}}-x_{3}^{\beta_{3}}\right]=0.
\end{split}
\label{eq:optimal-x2}
\end{equation}
The optimal solution $x_{2}^{\mathrm{opt}}$ is an element of interval
$(x_{1},x_{3})$ and satisfies constraint $\beta_{3}\log(x_{2}^{\mathrm{opt}})\leq[x_{3}^{\beta_{3}}\log(x_{3}^{\beta_{3}})-x_{1}^{\beta_{3}}\log(x_{1}^{\beta_{3}})-(x_{3}^{\beta_{3}}-x_{1}^{\beta_{3}})]/[x_{3}^{\beta_{3}}-x_{1}^{\beta_{3}}]$.
\end{thm}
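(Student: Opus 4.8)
The plan is to fix $x_{1}$ and $x_{3}$ and reduce the maximization of (\ref{eq:D}) to a one-variable problem in $x_{2}$. Write $\Delta(x_{2})=(x_{1}x_{2})^{\beta_{3}}\log(x_{2}/x_{1})-(x_{1}x_{3})^{\beta_{3}}\log(x_{3}/x_{1})+(x_{2}x_{3})^{\beta_{3}}\log(x_{3}/x_{2})$ for the bracketed factor in (\ref{eq:D}) and $\mu_{2}=\beta_{1}+\beta_{2}x_{2}^{\beta_{3}}$. With $x_{1},x_{3}$ held fixed, every factor of (\ref{eq:D}) other than $[\Delta(x_{2})]^{2}g'(\mu_{2})$ is a positive constant (positivity of $g'(\mu_{1})$ and $g'(\mu_{3})$ being required for a non-degenerate design and holding for all link functions considered), so it suffices to maximize $F(x_{2}):=[\Delta(x_{2})]^{2}g'(\mu_{2})$ over $x_{2}\in(x_{1},x_{3})$.

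First I would show the maximizer is interior. Substitution gives $\Delta(x_{1})=\Delta(x_{3})=0$. Setting $z_{i}=x_{i}^{\beta_{3}}$, the quantity $\beta_{3}\Delta(x_{2})$ is the generalized Vandermonde determinant built from the functions $1,\,z,\,z\log z$ at the nodes $z_{1}<z_{2}<z_{3}$; since $\{1,z,z\log z\}$ is a Chebyshev system on $(0,\infty)$ (any nontrivial combination $a+bz+cz\log z$ has second derivative $c/z$ of constant sign, hence at most two zeros), this determinant is nonzero, so $\Delta(x_{2})\neq0$ on $(x_{1},x_{3})$. A short check shows $\Delta'(x_{1})=x_{1}^{\beta_{3}-1}z_{3}\,[\log(z_{3}/z_{1})+z_{1}/z_{3}-1]>0$, so $\Delta>0$ just to the right of $x_{1}$ and, being continuous and non-vanishing, $\Delta>0$ on all of $(x_{1},x_{3})$. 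Hence $F$ is continuous and nonnegative, equals $0$ at $x_{1}$ and $x_{3}$, and is strictly positive on $(x_{1},x_{3})$ (using $g'(\mu_{2})>0$), so it attains its maximum at some $x_{2}^{\mathrm{opt}}\in(x_{1},x_{3})$.

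Next I would impose stationarity. Using $d\mu_{2}/dx_{2}=\beta_{2}\beta_{3}x_{2}^{\beta_{3}-1}$ one gets $F'(x_{2})=\Delta(x_{2})\,[\,2\Delta'(x_{2})g'(\mu_{2})+\beta_{2}\beta_{3}x_{2}^{\beta_{3}-1}\Delta(x_{2})g''(\mu_{2})\,]$, together with $\Delta'(x_{2})=x_{2}^{\beta_{3}-1}[\,\beta_{3}x_{1}^{\beta_{3}}\log(x_{2}/x_{1})+\beta_{3}x_{3}^{\beta_{3}}\log(x_{3}/x_{2})+x_{1}^{\beta_{3}}-x_{3}^{\beta_{3}}\,]$. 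Since $\Delta(x_{2}^{\mathrm{opt}})\neq0$ and $x_{2}^{\mathrm{opt}}>0$, setting $F'(x_{2}^{\mathrm{opt}})=0$, cancelling $\Delta(x_{2}^{\mathrm{opt}})$ and dividing by $(x_{2}^{\mathrm{opt}})^{\beta_{3}-1}$ reproduces (\ref{eq:optimal-x2}) verbatim.

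Finally, the inequality. The vanishing bracket at $x_{2}^{\mathrm{opt}}$ rearranges to $2\Delta'(x_{2}^{\mathrm{opt}})g'(\mu_{2})=-\beta_{2}\beta_{3}(x_{2}^{\mathrm{opt}})^{\beta_{3}-1}\Delta(x_{2}^{\mathrm{opt}})g''(\mu_{2})$; the right-hand side is a product of nonnegative factors ($\Delta>0$, $-g''\geq0$, and $\beta_{2},\beta_{3},x_{2}^{\mathrm{opt}}>0$) and $g'(\mu_{2})>0$, so $\Delta'(x_{2}^{\mathrm{opt}})\geq0$. Writing $\beta_{3}\log(x_{2}/x_{1})=\beta_{3}\log x_{2}-\log x_{1}^{\beta_{3}}$ and $\beta_{3}\log(x_{3}/x_{2})=\log x_{3}^{\beta_{3}}-\beta_{3}\log x_{2}$ in the explicit form of $\Delta'$, collecting the $\beta_{3}\log x_{2}^{\mathrm{opt}}$ terms and dividing by $x_{3}^{\beta_{3}}-x_{1}^{\beta_{3}}>0$ yields the claimed bound. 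The only genuinely nontrivial point is the non-vanishing of $\Delta$ on the open interval, which the Chebyshev-system observation settles; everything else is routine differentiation and sign bookkeeping.
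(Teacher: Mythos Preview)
Your argument is correct and follows the same overall skeleton as the paper: reduce to maximizing $g'(\mu_{2})[\Delta(x_{2})]^{2}$, show $\Delta>0$ on $(x_{1},x_{3})$ so the maximum is interior, read off (\ref{eq:optimal-x2}) from the first-order condition, and deduce the upper bound on $x_{2}^{\mathrm{opt}}$ from $\Delta'(x_{2}^{\mathrm{opt}})\geq 0$.

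The one substantive difference is how you establish $\Delta>0$ on the open interval. The paper works in the variable $z=x^{\beta_{3}}$, writes $h(z_{2})=-A_{1}z_{2}\log z_{2}+A_{2}z_{2}-A_{3}$, observes $h'(z_{2})=A_{2}-A_{1}-A_{1}\log z_{2}$ is strictly decreasing with its unique zero $z_{2}^{0}$ satisfying $z_{1}<z_{2}^{0}<z_{3}$ (via the elementary inequalities $1-a+\log a<0$ and $1-a+a\log a>0$), and concludes $h$ is unimodal with $h(z_{1})=h(z_{3})=0$, hence positive in between. You instead recognize $\beta_{3}\Delta$ as the generalized Vandermonde determinant of the Chebyshev system $\{1,z,z\log z\}$ at $z_{1}<z_{2}<z_{3}$, which immediately gives non-vanishing, and then pin down the sign by checking $\Delta'(x_{1})>0$. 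Your route is more conceptual and would transfer to other design matrices built from Chebyshev systems; the paper's direct computation additionally identifies the turning point $z_{2}^{0}$ explicitly, which is exactly the right-hand side of the stated bound, so in the paper the inequality $x_{2}^{\mathrm{opt}}\leq z_{2}^{0}$ drops out from locating the solution of (\ref{eq:optimal-x2}) in $(z_{1},z_{2}^{0}]$, whereas you recover the same inequality by rearranging $\Delta'(x_{2}^{\mathrm{opt}})\geq 0$. Both derivations are equivalent and both implicitly use strict positivity of $g'$ on the relevant range, which holds for all the link functions considered.
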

\begin{proof}
See Appendix B.
\end{proof}
Theorems \ref{th-x1}, \ref{th-x3}, and \ref{th-x2} all formulate
optimal choices of only one stimulus, conditionally on the other two
stimuli, whether these other two stimuli are chosen optimally or not.
The theorems tell us what to do with this one stimulus to maximize
determinant $|I_{3\times3}(\boldsymbol{\beta})|$ when the other stimuli
are already provided. For Theorem \ref{th-x2} it shows that the best
choice for $x_{2}\in(x_{1},x_{3})$ is the value that solves equation
(\ref{eq:optimal-x2}), if we wish to maximize determinant $|I_{3\times3}(\boldsymbol{\beta})|$.
\begin{cor}
\label{th_x2_x1is0}If the conditions of Theorem \ref{th-x2} hold
and $x_{1}=0$, then the optimal stimulus $x_{2}^{\mathrm{opt}}$
for $x_{2}$ that maximizes determinant $|I_{3\times3}(\boldsymbol{\beta})|$
in (\ref{eq:D}), is obtained by solving equation
\begin{equation}
\begin{split}\beta_{2}\beta_{3}g''\left(\mu_{2}\right)x_{2}^{\beta_{3}}\log(\tfrac{x_{3}}{x_{2}})+2g'\left(\mu_{2}\right)\left[\beta_{3}\log(\tfrac{x_{3}}{x_{2}})-1\right]=0\end{split}
\label{eq:optimal-x2-2}
\end{equation}
and the optimal solution satisfies $0<x_{2}^{\mathrm{opt}}\leq x_{3}\exp\{-\beta_{3}^{-1}\}$.
\end{cor}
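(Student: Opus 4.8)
The plan is to obtain Corollary~\ref{th_x2_x1is0} by specializing the computation behind Theorem~\ref{th-x2} to $x_{1}=0$, being careful about the (removable) degeneracy that $\log(x_{2}/x_{1})$ and $\log(x_{3}/x_{1})$ blow up there. First I would substitute $x_{1}=0$ directly into the explicit determinant~(\ref{eq:D}). Since $\beta_{3}>0$ we have $0^{\beta_{3}}=0$, so $\mu_{1}=\beta_{1}+\beta_{2}\cdot0^{\beta_{3}}=\beta_{1}$, and every term in the square bracket of~(\ref{eq:D}) that carries a factor $x_{1}^{\beta_{3}}$ vanishes; the determinant collapses to $\beta_{2}^{2}\bigl[(x_{2}x_{3})^{\beta_{3}}\log(x_{3}/x_{2})\bigr]^{2}\prod_{i=1}^{3}[n_{i}g'(\mu_{i})]$. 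Because $n_{1},n_{3},\mu_{1}=\beta_{1},\mu_{3}$ do not involve $x_{2}$, maximizing over $x_{2}\in(0,x_{3})$ is equivalent to maximizing $\psi(x_{2}):=h(x_{2})^{2}g'(\mu_{2})$ with $h(x_{2}):=(x_{2}x_{3})^{\beta_{3}}\log(x_{3}/x_{2})$, $\mu_{2}=\beta_{1}+\beta_{2}x_{2}^{\beta_{3}}$, and $\partial\mu_{2}/\partial x_{2}=\beta_{2}\beta_{3}x_{2}^{\beta_{3}-1}$.

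Next I would differentiate. A short computation gives $h'(x_{2})=x_{3}^{\beta_{3}}x_{2}^{\beta_{3}-1}\bigl[\beta_{3}\log(x_{3}/x_{2})-1\bigr]$, so the stationarity condition $\psi'(x_{2})=0$, after dividing by the nonzero factor $2h(x_{2})$ (which is strictly positive on $(0,x_{3})$), reads $2h'(x_{2})g'(\mu_{2})+h(x_{2})g''(\mu_{2})\beta_{2}\beta_{3}x_{2}^{\beta_{3}-1}=0$; inserting the expressions for $h$ and $h'$ and cancelling the common positive factor $x_{3}^{\beta_{3}}x_{2}^{\beta_{3}-1}$ yields exactly equation~(\ref{eq:optimal-x2-2}). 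One can (and should) remark that this is precisely the pointwise limit of~(\ref{eq:optimal-x2}) as $x_{1}\downarrow0$, using the elementary fact $\lim_{t\to0^{+}}t^{\beta_{3}}\log t=0$ for $\beta_{3}>0$ to kill the $(x_{1}x_{2})^{\beta_{3}}\log(x_{2}/x_{1})$, $(x_{1}x_{3})^{\beta_{3}}\log(x_{3}/x_{1})$ and $x_{1}^{\beta_{3}}\log(x_{2}/x_{1})$ terms; so the corollary is consistent with Theorem~\ref{th-x2} in the limit. For existence and location of the root, evaluate the left side $F(x_{2})$ of~(\ref{eq:optimal-x2-2}) at the endpoints: as $x_{2}\downarrow0$, $g'(\mu_{2})\to g'(\beta_{1})>0$ and $\log(x_{3}/x_{2})\to+\infty$ while $x_{2}^{\beta_{3}}\log(x_{3}/x_{2})\to0$, so $F(x_{2})\to+\infty$; as $x_{2}\uparrow x_{3}$, $F(x_{2})\to-2g'(\mu_{3})<0$. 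The intermediate value theorem then delivers a root $x_{2}^{\mathrm{opt}}\in(0,x_{3})$, which is the maximizer by the same monotonicity/second-order argument used in the proof of Theorem~\ref{th-x2}.

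Finally I would extract the explicit bound. Rewriting~(\ref{eq:optimal-x2-2}) as $2g'(\mu_{2})\bigl[\beta_{3}\log(x_{3}/x_{2})-1\bigr]=-\beta_{2}\beta_{3}x_{2}^{\beta_{3}}\log(x_{3}/x_{2})\,g''(\mu_{2})$ and noting that on $(0,x_{3})$ the factors $\beta_{2},\beta_{3},x_{2}^{\beta_{3}},\log(x_{3}/x_{2})$ are all positive while $g''(\mu_{2})\le0$, the right-hand side is $\ge0$; since $g'(\mu_{2})>0$ this forces $\beta_{3}\log(x_{3}/x_{2})-1\ge0$, i.e.\ $x_{2}^{\mathrm{opt}}\le x_{3}\exp\{-\beta_{3}^{-1}\}$. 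Together with $x_{2}^{\mathrm{opt}}>x_{1}=0$ this is the stated conclusion $0<x_{2}^{\mathrm{opt}}\le x_{3}\exp\{-\beta_{3}^{-1}\}$; equivalently, it is the $x_{1}\downarrow0$ limit of the Theorem~\ref{th-x2} constraint, because $x_{1}^{\beta_{3}}\log(x_{1}^{\beta_{3}})\to0$ makes the right-hand bound there converge to $\log(x_{3}^{\beta_{3}})-1$. The only real obstacle is bookkeeping rather than mathematics: one must verify that passing to $x_{1}=0$ is legitimate (the $x_{1}^{\beta_{3}}\log x_{1}$ terms must be shown to vanish, which is the elementary limit above) and that the stationary point found is a maximum and not a minimum; I expect the cleanest route is the direct re-derivation from~(\ref{eq:D}) sketched here, with the limit statement added only as a consistency check.
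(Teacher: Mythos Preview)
Your proposal is correct and follows essentially the same approach as the paper: the paper's proof simply substitutes $x_{1}=0$ into equation~(\ref{eq:optimal-x2}) and into the constraint of Theorem~\ref{th-x2}, obtaining~(\ref{eq:optimal-x2-2}) and the bound $x_{3}\exp\{-\beta_{3}^{-1}\}$ in one line each. Your write-up is more careful than the paper's in that you explicitly handle the indeterminate forms $x_{1}^{\beta_{3}}\log x_{1}\to0$, re-derive the stationarity condition directly from the reduced determinant, and give an independent sign argument for the upper bound rather than merely specializing the Theorem~\ref{th-x2} constraint.
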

\begin{proof}
Substituting $x_{1}=0$ in (\ref{eq:optimal-x2}) leads directly to
equation (\ref{eq:optimal-x2-2}), since the value of the third stimulus
is always larger than zero (i.e., $x_{3}>0$) to guarantee that we
have three different stimuli. Furthermore, substituting $x_{1}=0$
in the boundaries on $x_{2}^{\mathrm{opt}}$ results in the lower
and upper boundary $0$ and $x_{3}\exp\{-\beta_{3}^{-1}\}$, respectively.
\end{proof}
Corollary \ref{th_x2_x1is0} and Theorem \ref{th-x2} demonstrate
that the optimal value $x_{2}^{\mathrm{opt}}$ for the second stimulus
depends on the two other stimuli $x_{1}$ and $x_{3}$, and on the
link function $g$ through its derivatives $g'$ and $g''$. Equations
(\ref{eq:optimal-x2}) and (\ref{eq:optimal-x2-2}) also show that
distributions with the same link function result in the same D-optimal
design. Thus the D-optimal designs for a Poisson and Negative Binomial
distributed response $y$ are identical, since they both have canonical
link function $g(\mu)=\log(\mu)$ and they just differ in the dispersion
variable $\phi$. Whether all three model parameters $\boldsymbol{\beta}$
are involved in $x_{2}^{\mathrm{opt}}$, depends on the link function
(see Section \ref{subsec:Examples}), but it always involves the power
parameter $\beta_{3}$.

\subsection{Examples\label{subsec:Examples}}

We will discuss D-optimal designs for the well-known distribution
functions of the exponential family of distributions using their canonical
link function. As illustration we will assume a measurement system
analysis for which the stimuli can range from $L=0$ to $U=15$. We
will consider six combinations of parameter settings for $\beta_{1}\in\{0.5,1.0\}$,
$\beta_{2}\in\{0.8,1.0,1.2\}$, and $\beta_{3}\in\{0.9,1.0,1.1\}$.

\textbf{\uline{Gaussian Distribution:}} For the Gaussian distribution
with the identity link function, equation (\ref{eq:optimal-x2}) can
be solved explicitly. Theorems \ref{th-x1} and \ref{th-x3} imply
that the first and third optimal stimulus should be chosen equal to
$x_{1}^{\mathrm{opt}}=L$ and $x_{3}^{\mathrm{opt}}=U$, respectively.
Then the optimal second stimulus is equal to
\begin{equation}
x_{2}^{\mathrm{opt}}=\exp\left\{ [U^{\beta_{3}}\log(U)-L^{\beta_{3}}\log(L)]/[U^{\beta_{3}}-L^{\beta_{3}}]-\beta_{3}^{-1}\right\} ,\label{eq:opt-x2-normal}
\end{equation}
which depends only on the power parameter $\beta_{3}$ (and not on
the intercept $\beta_{1}$ and slope $\beta_{2}$). In case the lower
boundary $L$ is equal to zero, the optimal second stimulus reduces
to $x_{2}^{\mathrm{opt}}=U\exp\{-\beta_{3}^{-1}\}$, which is equal
to the upper bound on $x_{2}^{\mathrm{opt}}$ mentioned in Corollary
\ref{th_x2_x1is0}. Table \ref{tab:D-Rest} shows the optimal stimulus
$x_{2}^{\mathrm{opt}}$ for our illustration. Since $\beta_{3}\approx1$,
the stimulus is approximately $36.8\%$ of the upper boundary $U=15$.

\begin{table}[h]
\caption{Optimal value for the second stimulus $x_{2}$ for different distributions
of the exponential family ($L=0$ and $U=15$).\label{tab:D-Rest}}

\centering

{\small{}}%
\begin{tabular}{|c|c|c||c|c|c|c|c|c|}
\hline 
\multicolumn{3}{|c||}{{\small{}Parameters}} & \multirow{2}{*}{{\small{}Gaussian}} & \multirow{2}{*}{{\small{}Poisson}} & \multirow{2}{*}{{\small{}Gamma}} & \multicolumn{3}{c|}{{\small{}Binomial}}\tabularnewline
\cline{1-3} \cline{2-3} \cline{3-3} \cline{7-9} \cline{8-9} \cline{9-9} 
{\small{}$\beta_{1}$} & {\small{}$\beta_{2}$} & {\small{}$\beta_{3}$} &  &  &  & {\small{}$N=25$} & {\small{}$N=50$} & {\small{}$N=100$}\tabularnewline
\hline 
\hline 
{\small{}0.5} & {\small{}1.2} & {\small{}0.9} & {\small{}4.94} & {\small{}2.24} & {\small{}0.70} & {\small{}2.65} & {\small{}2.41} & {\small{}2.32}\tabularnewline
\hline 
{\small{}0.5} & {\small{}1.0} & {\small{}1.0} & {\small{}5.52} & {\small{}2.67} & {\small{}0.90} & {\small{}3.16} & {\small{}2.87} & {\small{}2.76}\tabularnewline
\hline 
{\small{}0.5} & {\small{}0.8} & {\small{}1.1} & {\small{}6.04} & {\small{}3.10} & {\small{}1.14} & {\small{}3.66} & {\small{}3.33} & {\small{}3.20}\tabularnewline
\hline 
{\small{}1.0} & {\small{}1.2} & {\small{}0.9} & {\small{}4.94} & {\small{}2.58} & {\small{}1.12} & {\small{}3.04} & {\small{}2.77} & {\small{}2.67}\tabularnewline
\hline 
{\small{}1.0} & {\small{}1.0} & {\small{}1.0} & {\small{}5.52} & {\small{}3.02} & {\small{}1.38} & {\small{}3.57} & {\small{}3.25} & {\small{}3.13}\tabularnewline
\hline 
{\small{}1.0} & {\small{}0.8} & {\small{}1.1} & {\small{}6.04} & {\small{}3.47} & {\small{}1.68} & {\small{}4.08} & {\small{}3.71} & {\small{}3.58}\tabularnewline
\hline 
\end{tabular}{\small\par}
\end{table}

\textbf{\uline{Poisson and Negative Binomial Distribution:}} The
canonical link function is $g(\mu)=\log(\mu)$, which implies that
$x_{1}^{\mathrm{opt}}=L$ and $x_{3}^{\mathrm{opt}}=U$ (Theorems
\ref{th-x1} and \ref{th-x3}, respectively). A solution for equation
(\ref{eq:optimal-x2}) can only be obtained numerically and this equation
contains all three parameters of the Mitscherlich function. When $L=0$,
equation (\ref{eq:optimal-x2-2}) reduces to
\begin{equation}
\left[\beta_{1}+\beta_{2}x_{2}^{\beta_{3}}\right]\left[\beta_{3}\log(\tfrac{U}{x_{2}})-2\right]+\beta_{1}\beta_{3}\log(\tfrac{U}{x_{2}})=0,\label{eq:Poisson}
\end{equation}
which can not be solved explicitly either and still depends on all
three parameters. However, the optimal solution for the second stimulus
$x_{2}^{\mathrm{opt}}$ is inside the interval $[U\exp\{-2\beta_{3}^{-1}\},U\exp\{-\beta_{3}^{-1}\}]$.
Indeed, the left-hand side in (\ref{eq:Poisson}) is non-negative
when $\beta_{3}\log(U/x_{2})\ge2$, which means that $x_{2}^{\mathrm{opt}}\in[U\exp\{-2\beta_{3}^{-1}\},U)$.
In addition, the left-hand side in (\ref{eq:Poisson}) is non-positive
if $\beta_{3}\log(U/x_{2})\le1$, which means that $x_{2}^{\mathrm{opt}}\in(0,U\exp\{-\beta_{3}^{-1}\}]$,
but this was already known from the upper boundary in Corollary \ref{th_x2_x1is0}.
When the intercept $\beta_{1}=0$, the optimal value for the second
stimulus becomes $x_{2}^{\mathrm{opt}}=U\exp\{-2\beta_{3}^{-1}\}$,
which is different from the solution of the Gaussian distribution.
Table \ref{tab:D-Rest} shows that the optimal stimulus $x_{2}^{\mathrm{opt}}$
is substantially lower than the solution of the Gaussian distribution
with the identity link function.

\textbf{\uline{Gamma Distribution:}} The canonical link function
$g(\mu)=-\mu^{-1}$ together with Theorems \ref{th-x1} and \ref{th-x3},
imply that $x_{1}^{\mathrm{opt}}=L$ and $x_{3}^{\mathrm{opt}}=U$.
The solution of equation (\ref{eq:optimal-x2}) can be determined
numerically and it involves all three parameters $\beta_{1}$, $\beta_{2}$,
and $\beta_{3}$ of the Mitscherlich function. If we assume again
that $L=0$, equation (\ref{eq:optimal-x2-2}) reduces to
\begin{equation}
\beta_{1}+\beta_{2}x_{2}^{\beta_{3}}+\beta_{1}\beta_{3}\log(x_{2})=\beta_{1}\beta_{3}\log(U),
\end{equation}
which can be solved numerically for different values of $\beta_{1}$,
$\beta_{2}$, and $\beta_{3}$. In case $\beta_{1}=0$, the optimal
second stimulus becomes equal to $x_{2}^{\mathrm{opt}}=0$, but these
results are not allowed for a gamma distribution with a positive range.
The parameter $\beta_{1}$ should be positive when we allow the stimulus
$x_{1}$ to be equal to zero. Table \ref{tab:D-Rest} shows that the
optimal stimulus $x_{2}^{\mathrm{opt}}$ is still close to zero when
$\beta_{1}>0$.

\textbf{\uline{Binomial Distribution:}} Solving equation (\ref{eq:optimal-x2})
or even (\ref{eq:optimal-x2-2}) for the Binomial distribution with
canonical link function $g(\mu)=\log(\mu/[N-\mu])$ is very tedious
and does not easily reduce into manageable functions. The solution
$x_{2}^{\mathrm{opt}}$ depends on all three parameters $\beta_{1}$,
$\beta_{2}$, and $\beta_{3}$ of the Mitscherlich function and numerical
approaches should be used to determine the D-optimal design. When
$\beta_{1}+\beta_{2}U^{\beta_{3}}\leq N/2$ we know that $x_{1}^{\mathrm{opt}}=L$
and $x_{3}^{\mathrm{opt}}=U$ based on Theorems \ref{th-x1} and \ref{th-x3}.
Thus for our illustration with $L=0$ and $U=15$ and the six selected
combinations of parameter settings $\beta_{1}\in\{0.5,1.0\}$, $\beta_{2}\in\{0.8,1.0,1.2\}$,
and $\beta_{3}\in\{0.9,1.0,1.1\}$ in Table \ref{tab:D-Rest}, a sample
size of $N=34$ would be enough to satisfy the condition $g''(\mu)\leq0$
in Theorems \ref{th-x1}, \ref{th-x3} and \ref{th-x2}. Thus for
$N=50$ and $N=100$, we know that $x_{1}^{\mathrm{opt}}=L=0$ and
$x_{3}^{\mathrm{opt}}=U=15$, but for $N=25$ we do not know this.
Investigating the optimal stimuli $x_{i}\in[0,15]$ using numerical
calculations (just calculating the determinant for a grid of stimuli
using step size $h=0.01$) still shows that $x_{1}^{\mathrm{opt}}=0$
and $x_{3}^{\mathrm{opt}}=15$. Table \ref{tab:D-Rest} shows the
results of the optimal stimulus $x_{2}^{\mathrm{opt}}$ for all settings.
The results show that $x_{2}^{\mathrm{opt}}$ is close to the optimal
value of the Poisson, which is not a surprise since the Binomial and
Poisson distribution are very similar, in particular when the sample
size $N$ is increasing. This resemblence between the two distributions
may explain why the optimal stimulus $x_{1}^{\mathrm{opt}}$ and $x_{3}^{\mathrm{opt}}$
are still equal to $L=0$ and $U=15$ when $N=25$.

\textbf{\uline{Inverse Gaussian Distribution:}} Since the canonical
link function $g(\mu)=-0.5\mu^{-2}$ does not satisfy the conditions
of Theorem \ref{th-x3}, i.e., $\mu g''(\mu)+2g'(\mu)=-\mu^{-3}<0$
for $\mu\in(0,\infty)$, it is not known how to choose the optimal
value for the third stimulus $x_{3}$, but we know from Theorem \ref{th-x1}
that $x_{1}^{\mathrm{opt}}=L$. If we assume that $L=0$, determinant
$|I_{3\times3}(\boldsymbol{\beta})|$ becomes equal to
\begin{equation}
n_{1}n_{2}n_{3}\beta_{1}^{-3}\beta_{2}^{2}\left[(x_{2}x_{3})^{\beta_{3}}\log(\tfrac{x_{3}}{x_{2}})\right]^{2}\left[\beta_{1}+\beta_{2}x_{2}^{\beta_{3}}\right]^{-3}\left[\beta_{1}+\beta_{2}x_{3}^{\beta_{3}}\right]^{-3}.\label{eq:D-IG}
\end{equation}
Given the parameters $\beta_{1}$, $\beta_{2}$ and $\beta_{3}$ a
grid search for $x_{2}$ and $x_{3}$ can be conducted to maximize
(\ref{eq:D-IG}). Table \ref{tab:D-IG} shows the optimal stimuli
for a measurement study where the stimuli can range from $L=0$ to
$U=15$. We used a step size of $0.01$ in our grid search. Table
\ref{tab:D-IG} shows that $x_{3}^{\mathrm{opt}}$ is far away from
the boundary $U=15$ and $x_{2}^{\mathrm{opt}}$ is relatively close
to $L=0$.

\begin{table}[h]
\caption{D-optimal design for the Inverse Gaussian distribution with canonical
link function $g(\mu)=-0.5\mu^{-2}$ and boundaries $L=0$ and $U=15$.\label{tab:D-IG}}

\centering

{\small{}}%
\begin{tabular}{|c|c|c||c|c|c|c||c|c|c|}
\hline 
\multicolumn{3}{|c||}{{\small{}Parameters}} & \multicolumn{4}{c||}{{\small{}Optimal Design{*}}} & \multicolumn{3}{c|}{{\small{}Equidistant Designs}}\tabularnewline
\hline 
\hline 
{\small{}$\beta_{1}$} & {\small{}$\beta_{2}$} & {\small{}$\beta_{3}$} & {\small{}$x_{1}^{\mathrm{opt}}$} & {\small{}$x_{2}^{\mathrm{opt}}$} & {\small{}$x_{3}^{\mathrm{opt}}$} & {\small{}$|I_{3\times3}(\boldsymbol{\beta})|$} & {\small{}$d=60$} & {\small{}$d=30$} & {\small{}$d=15$}\tabularnewline
\hline 
{\small{}0.5} & {\small{}1.2} & {\small{}0.9} & {\small{}0} & {\small{}0.26} & {\small{}5.21} & {\small{}1.455} & {\small{}70.8\%} & {\small{}55.4\%} & {\small{}21.0\%}\tabularnewline
\hline 
{\small{}0.5} & {\small{}1.0} & {\small{}1.0} & {\small{}0} & {\small{}0.36} & {\small{}5.32} & {\small{}1.697} & {\small{}64.5\%} & {\small{}65.2\%} & {\small{}32.3\%}\tabularnewline
\hline 
{\small{}0.5} & {\small{}0.8} & {\small{}1.1} & {\small{}0} & {\small{}0.48} & {\small{}5.58} & {\small{}2.192} & {\small{}51.8\%} & {\small{}68.8\%} & {\small{}45.3\%}\tabularnewline
\hline 
{\small{}1.0} & {\small{}1.2} & {\small{}0.9} & {\small{}0} & {\small{}0.57} & {\small{}11.34} & {\small{}0.045} & {\small{}64.9\%} & {\small{}73.4\%} & {\small{}45.9\%}\tabularnewline
\hline 
{\small{}1.0} & {\small{}1.0} & {\small{}1.0} & {\small{}0} & {\small{}0.72} & {\small{}10.65} & {\small{}0.053} & {\small{}51.3\%} & {\small{}73.7\%} & {\small{}59.4\%}\tabularnewline
\hline 
{\small{}1.0} & {\small{}0.8} & {\small{}1.1} & {\small{}0} & {\small{}0.91} & {\small{}10.53} & {\small{}0.068} & {\small{}36.0\%} & {\small{}66.4\%} & {\small{}69.8\%}\tabularnewline
\hline 
\end{tabular}{\small\par}
\raggedright{}{\footnotesize{}{*}We have assumed that the sample sizes
are equal to one ($n_{1}=n_{2}=n_{3}=1$) for calculation of $I_{3\times3}(\boldsymbol{\beta})$.}{\footnotesize\par}
\end{table}

The optimal solution for the second stimulus should satisfy equation
(\ref{eq:optimal-x2-2}), which can be rewritten in
\begin{equation}
\beta_{3}\left[\beta_{2}x_{2}^{\beta_{3}}-2\beta_{1}\right]\log(\tfrac{x_{3}}{x_{2}})+2\left[\beta_{1}+\beta_{2}x_{2}^{\beta_{3}}\right]=0.\label{eq:IG-x2}
\end{equation}
In case $\beta_{2}x_{2}^{\beta_{3}}-2\beta_{1}>0$ the left-hand side
in (\ref{eq:IG-x2}) is positive, while it is negative when $x_{2}$
gets close to zero. This implies that $x_{2}^{\mathrm{opt}}\in(0,[2\beta_{1}/\beta_{2}]^{1/\beta_{3}})$,
illustrating that $x_{2}^{\mathrm{opt}}$ can never be far away from
zero (unless $\beta_{2}$ is close to zero). This upper bound on $x_{2}$
can be useful in a grid search for maximization of (\ref{eq:D-IG}),
since $x_{2}$ should never go beyond $[2\beta_{1}/\beta_{2}]^{1/\beta_{3}}$
and $x_{3}$ should never start before $[2\beta_{1}/\beta_{2}]^{1/\beta_{3}}$
when $x_{2}^{\mathrm{opt}}$ reaches this bound.

Furthermore, in a measurement system analysis it is common to use
equidistant stimuli designs, either in the original scale or otherwise
in the logarithmic scale. Considering the D-optimal design in Table
\ref{tab:D-IG}, an equidistant design in the logarithmic scale is
closer to the D-optimal design than an equidistant design in its original
scale, although the dilution factor varies with the parameters $\beta_{1}$,
$\beta_{2}$ and $\beta_{3}$. The efficiency of these so-called dilution
designs with respect to the optimal design is provided in Table \ref{tab:D-IG}
for different dilution factors $d$ (and taking $x_{3}=U$). It is
obvious that our D-optimal design is substantially more efficient
than a dilution experiment.

\section{Relations to Earlier Work and Extensions\label{sec:Extensions}}

As we mentioned earlier, D-optimal designs for the Mitscherlich function
were already obtained for the normal distribution with homogeneous
residual variances \cite{key-1,key-5,key-41}, but they used different
parametrizations. Here we will show that these parametrizations are
irrelevant. We will also show that under certain conditions our D-optimal
solution can be obtained from minimizing a weighted least squares.
However, when we start considering heteroscedastic residual variances,
these two approaches for optimal designs can also be different. We
finish with a discussion on extending our work to transformations
of the Mitscherlich function $\psi(\beta_{1}+\beta_{2}x_{i}^{\beta_{3}})$.

\subsection{Existing D-Optimal Designs}

In our results we formulated the Mitscherlich non-linear function
as $\mathbb{E}(y_{ij}|x_{i})=\beta_{1}+\beta_{2}x_{i}^{\beta_{3}}$,
with $x_{i}$ a non-negative stimulus, and $\beta_{2}$ and $\beta_{3}$
both positive. However, Box and Lucas \cite{key-1} introduced the
Mitscherlich function as $\mathbb{E}(y_{ij}|z_{i})=\beta_{1}-\beta_{2}\exp\{-\beta_{3}z_{i}\}$,
with $\beta_{2}>0$ and $\beta_{3}>0$, Han and Chaloner \cite{key-5}
used $\mathbb{E}(y_{ij}|z_{i})=\beta_{1}+\beta_{2}\exp\{-\beta_{3}z_{i}\}$,
with $\beta_{2}>0$ and $\beta_{3}>0$, and Dette \textit{et al}.,
\cite{key-41} used $\mathbb{E}(y_{ij}|z_{i})=\beta_{1}+\beta_{2}\exp\{z_{i}/\tilde{\beta}_{3}\}$,
with $\beta_{2}>0$ and $\tilde{\beta}_{3}>0$. It is reasonably straightforward
to calculate the D-optimal designs for these three formulations from
our results , since we just deal with reformulations of our own form
$\mathbb{E}(y_{ij}|x_{i})=\beta_{1}+\beta_{2}x_{i}^{\beta_{3}}$ with
$\beta_{2}>0$ and $\beta_{3}>0$.

For Dette \textit{et al}.'s formulation under their assumption of
normality with homoscedastic residual variances, we need to both reparametrize
$\beta_{3}$ and transform the stimulus.Our stimulus $x_{i}$ can
be taken equal to $x_{i}=\exp\{z_{i}$\} and the power parameter $\beta_{3}$
can be taken equal to $\beta_{3}=1/\tilde{\beta}_{3}$. Here the transformation
for the stimulus is an increasing function of $z_{i}$, thus Theorems
\ref{th-x1} and \ref{th-x3} indicate we need to choose $z_{1}^{\mathrm{opt}}$
as small as possible and $z_{3}^{\mathrm{opt}}$ as large as possible.
In case $z_{i}$ represents time, the smallest value could potentially
be zero, implying that $x_{1}^{\mathrm{opt}}\geq1$. Using optimal
solution (\ref{eq:opt-x2-normal}) with $L=\exp\{z_{1}^{\mathrm{opt}}\}$,
$U=\exp\{z_{3}^{\mathrm{opt}}\}$, $\beta_{3}=1/\tilde{\beta}_{3}$,
and $x_{2}^{\mathrm{opt}}=\exp\{z_{2}^{\mathrm{opt}}\}$, the optimal
solution $z_{2}^{\mathrm{opt}}$ is now equal to
\begin{equation}
z_{2}^{\mathrm{opt}}=\dfrac{z_{3}^{\mathrm{opt}}\exp\{z_{3}^{\mathrm{opt}}/\tilde{\beta}_{3}\}-z_{1}^{\mathrm{opt}}\exp\{z_{1}^{\mathrm{opt}}/\tilde{\beta}_{3}\}}{\exp\{z_{3}^{\mathrm{opt}}/\tilde{\beta}_{3}\}-\exp\{z_{1}^{\mathrm{opt}}/\tilde{\beta}_{3}\}}-\tilde{\beta}_{3},\label{eq:z2-opt}
\end{equation}
which was indeed presented in \cite{key-41}.

The optimal solutions for the other two formulations can be obtained
in a similar way. For the formulation of Box and Lucas, we can take
$x_{i}=$exp$\{z_{i}\}$. If one realizes that our proofs of Theorems
\ref{th-x1}, \ref{th-x3}, and \ref{th-x2} remain correct if both
$\beta_{2}$ and $\beta_{3}$ become negative (instead of being both
positive), we obtain also $z_{1}^{\mathrm{opt}}=z_{\mathrm{min}}$,
$z_{3}^{\mathrm{opt}}=z_{\mathrm{max}}$, with $z_{\mathrm{min}}$
and $z_{\mathrm{max}}$ the minimal and maximal allowable value for
stimulus $z$, and $z_{2}^{\mathrm{opt}}$ satisfies (\ref{eq:z2-opt})
with $\tilde{\beta}_{3}$ replaced by $-\beta_{3}^{-1}$, which was
obtained by \cite{key-1}. For the formulation of Han and Chaloner,
with $x_{i}=$exp$\{-z_{i}\}$, we obtain $z_{1}^{\mathrm{opt}}=z_{\mathrm{max}}$,
$z_{3}^{\mathrm{opt}}=z_{\mathrm{min}}$, and
\[
z_{2}^{\mathrm{opt}}=\dfrac{z_{1}^{\mathrm{opt}}\exp\{-\beta_{3}z_{1}^{\mathrm{opt}}\}-z_{3}^{\mathrm{opt}}\exp\{-\beta_{3}z_{3}^{\mathrm{opt}}\}}{\exp\{-\beta_{3}z_{1}^{\mathrm{opt}}\}-\exp\{-\beta_{3}z_{3}^{\mathrm{opt}}\}}+\dfrac{1}{\beta_{3}},
\]
which was reported by \cite{key-5}. The order for $z_{1}^{\mathrm{opt}}$
and $z_{3}^{\mathrm{opt}}$ is changed, since the stimulus $x_{i}=\exp\{-z_{i}\}$
is now a decreasing function of $z_{i}$.

\subsection{Weighted Least Squares and Heteroscedasticity}

The optimal design for the Mitscherlich function that was proposed
by Box and Lucas \cite{key-1} in 1959, was based on the linearization
of the Mitscherlich non-linear function and the maximization of the
determinant of the corresponding design matrix (as if they were constructing
a D-optimal design for a linear regression problem \cite{key-52}).
Their design matrix was equal to
\begin{equation}
\boldsymbol{X}=\begin{pmatrix}\frac{\partial\mu_{1}}{\partial\beta_{1}} & \frac{\partial\mu_{1}}{\partial\beta_{2}} & \frac{\partial\mu_{1}}{\partial\beta_{3}}\\
\frac{\partial\mu_{2}}{\partial\beta_{2}} & \frac{\partial\mu_{2}}{\partial\beta_{2}} & \frac{\partial\mu_{2}}{\partial\beta_{3}}\\
\frac{\partial\mu_{3}}{\partial\beta_{1}} & \frac{\partial\mu_{3}}{\partial\beta_{2}} & \frac{\partial\mu_{3}}{\partial\beta_{3}}
\end{pmatrix}=\begin{pmatrix}1 & x_{1}^{\beta_{3}} & \beta_{2}x_{1}^{\beta_{3}}\log\left(x_{1}\right)\\
1 & x_{2}^{\beta_{3}} & \beta_{2}x_{2}^{\beta_{3}}\log\left(x_{2}\right)\\
1 & x_{3}^{\beta_{3}} & \beta_{2}x_{3}^{\beta_{3}}\log\left(x_{3}\right)
\end{pmatrix}\label{f.1}
\end{equation}
and they maximized determinant $|\boldsymbol{X}^{T}\boldsymbol{X}|$
over $L\leq x_{1}<x_{2}<x_{3}\leq U$, assuming that at each stimulus
the same sample size was used. For an imbalanced design the determinant
becomes $|\boldsymbol{X}^{T}\boldsymbol{W}\boldsymbol{X}|,$ with
$\boldsymbol{W}$ a $3\times3$ diagonal matrix with $n_{1}$, $n_{2}$,
and $n_{3}$ at the diagonal. Under the assumption of normality with
the identity link function, this linearization with an imbalanced
design leads to the maximization of $|I_{3\times3}(\boldsymbol{\beta})|$
in (\ref{eq:D}), see also the variances of the score functions in
(\ref{eq:Fisher-Information}). However, for other distributions,
with another canonical link function than the identity, our D-optimal
design would deviate from the optimal design of \cite{key-1} since
the linearization should involve the link function.

To generalize the approach of \cite{key-1}, we should change $\boldsymbol{X}^{T}\boldsymbol{X}$
such that it represents the variances in (\ref{eq:Fisher-Information}).
Thus if we would choose weight matrix $\boldsymbol{W}$ by
\begin{equation}
\boldsymbol{W}=\begin{pmatrix}n_{1}g'(\mu_{1})/a(\phi) & 0 & 0\\
0 & n_{2}g'(\mu_{2})/a(\phi) & 0\\
0 & 0 & n_{3}g'(\mu_{3})/a(\phi)
\end{pmatrix}\label{eq:weights}
\end{equation}
and maximize determinant $|\boldsymbol{X}^{T}\boldsymbol{W}\boldsymbol{X}|$,
we would maximize determinant $|I_{3\times3}(\boldsymbol{\beta})|$
in (\ref{eq:D}). Thus by changing the least squares approach of \cite{key-1}
to a weighted least squares approach, we obtain the D-optimal designs
for the Mitscherlich non-linear function for any of the distributions
in the exponential family that satisfy condition (\ref{eq:Independence}).
It should be noted that the weight $w_{i}=n_{i}a^{-1}(\phi)g'(\mu_{i})$
in (\ref{eq:weights}) is equal to $w_{i}=n_{i}[\mathsf{VAR}(y_{ij})]^{-1}$,
the typical weights used in a weighted linear regression approach.

If we return to the normal distribution again and assume that the
dispersion parameter $\phi$ depends on the mean $\mu$, i.e., $y_{ij}\sim N(\mu_{i},\sigma^{2}\varphi(\mu_{i}))$,
with $\varphi$ a positive function that is twice differentiable,
and $\sigma^{2}$ known, the Fisher information matrix $I_{3\times3}(\boldsymbol{\beta})$
becomes equal to (Appendix C):
\begin{equation}
\sum_{i=1}^{3}n_{i}h(\mu_{i})\begin{pmatrix}1 & x_{i}^{\beta_{3}} & \beta_{2}x_{i}^{\beta_{3}}\log(x_{i})\\
x_{i}^{\beta_{3}} & x_{i}^{2\beta_{3}} & \beta_{2}x_{i}^{2\beta_{3}}\log(x_{i})\\
\beta_{2}x_{i}^{\beta_{3}}\log(x_{i}) & \beta_{2}x_{i}^{2\beta_{3}}\log(x_{i}) & [\beta_{2}x_{i}^{\beta_{3}}\log(x_{i})]^{2}
\end{pmatrix}\label{eq:WI_3}
\end{equation}
with $h(\mu)=0.5[\varphi'(\mu)/\varphi(\mu)]^{2}+[\sigma^{2}\varphi(\mu)]^{-1}$,
$\varphi'$ the first derivative of $\varphi$, and with the summation
in (\ref{eq:WI_3}) taken element wise. This Fisher information matrix
has strong similarities with the Fisher information matrix for constructing
optimal designs for the Michaelis-Menten curve studied in \cite{key-30}.
If we now take the weight $w_{i}=n_{i}[\mathsf{VAR}(y_{ij})]^{-1}$
and consider $\boldsymbol{X}^{T}\boldsymbol{W}\boldsymbol{X}$, we
obtain matrix (\ref{eq:WI_3}) with $h(\mu_{i})$ equal to $[\sigma^{2}\varphi(\mu_{i})]^{-1}$.
Thus under heteroscedasticity, the usual inverse variance weight $w_{i}=n_{i}[\mathsf{VAR}(y_{ij})]^{-1}$
does not lead to a D-optimal design, but if we choose the weight $w_{i}=n_{i}h(\mu_{i})$,
with $h(\mu_{i})$ as defined in (\ref{eq:WI_3}), $\boldsymbol{X}^{T}\boldsymbol{W}\boldsymbol{X}$
becomes equal to (\ref{eq:WI_3}).

The determinant of $I_{3\times3}(\boldsymbol{\beta})$ in (\ref{eq:WI_3})
becomes equal to (\ref{eq:D}) with $g'(\mu_{i})$ replaced by $h(\mu_{i})$,
making use of Matlab. Thus the solutions $x_{1}$, $x_{2}$, and $x_{3}$
that maximize $|I_{3\times3}(\boldsymbol{\beta})|$ are determined
by our Theorems \ref{th-x1}, \ref{th-x3}, and \ref{th-x2} when
the following three conditions are satisfied $h(\mu)\geq0$, $h'(\mu)\leq0$,
and $\mu h'(\mu)+2h(\mu)\geq0$. If we would assume that the dispersion
parameter is a power function of the mean, i.e., $\varphi(\mu)=\mu^{p}$,
with $p>0$, we obtain that $h(\mu)=0.5p^{2}\mu^{-2}+\sigma^{-2}\mu^{-p}$,
$h'(\mu)=-p[p\mu^{-3}+\sigma^{-2}\mu^{-p-1}]$, and $\mu h'(\mu)+2h(\mu)=[2-p]\sigma^{-2}\mu^{-p}$.
Thus conditions $h(\mu)\geq0$ and $h'(\mu)\leq0$ are always satisfied
when $p>0$, but condition $\mu h'(\mu)+2h(\mu)\geq0$ is only satisfied
when $0<p\leq2$, implying that we would only choose $x_{3}$ equal
to its maximum value when $p\leq2$. When we assume that $x_{1}^{\mathrm{opt}}=0$
and $p\in(0,2]$, the optimal solution $x_{2}^{\mathrm{opt}}$ follows
from (\ref{eq:optimal-x2-2}) with $g'=h$, and should satisfy equation
\[
\beta_{3}\left[\dfrac{(2-p)}{\sigma^{2}\mu_{2}^{p}}+\dfrac{p\beta_{1}}{\sigma^{2}\mu_{2}^{p+1}}+\dfrac{p^{2}\beta_{1}}{\mu_{2}^{3}}\right]\log\left(\dfrac{x_{3}}{x_{2}}\right)=\dfrac{p^{2}}{\mu_{2}^{2}}+\dfrac{2}{\sigma^{2}\mu_{2}^{p}},
\]
with $\mu_{2}=\beta_{1}+\beta_{2}x_{2}^{\beta_{3}}.$

Unfortunately, when $\sigma^{2}$ would be unknown, the D-optimal
design is not determined by determinant $I_{3\times3}(\boldsymbol{\beta})$
anymore, even though condition (\ref{eq:Independence}) is still satisfied.
Indeed, the variance of the score function with respect to $\sigma$
is independent of $\boldsymbol{\beta}$, since $\mathsf{VAR}(\ell_{\sigma}^{\prime})=0.5n/\sigma^{4}$
(Appendix C). The issue is that the covariances between the score
functions with respect to $\boldsymbol{\beta}$ and the score function
with respect to $\sigma$ are no longer equal to zero (Appendix C).

\subsection{Transformations of the Mitscherlich function}

Our results provide D-optimal designs for $\mathbb{E}(y_{ij}|x_{i})=\beta_{1}+\beta_{2}x_{i}^{\beta_{3}}$,
with $\beta_{1}\geq0$, $\beta_{2},\beta_{3}>0$, $x_{i}\geq0$, and
$y_{ij}$ having a distribution in the canonical exponential family
with $\theta_{i}=g(\beta_{1}+\beta_{2}x_{i}^{\beta_{3}})$. If we
wish to study the non-linear function $\mathbb{E}(y_{ij}|x_{i})=\psi(\beta_{1}+\beta_{2}x_{i}^{\beta_{3}})$,
with canonical link function $g$, the log-likelihood function in
(\ref{eq:likelihood}) and determinant $|I_{3\times3}(\boldsymbol{\beta})|$
in (\ref{eq:D}) both change due to the transformation $\psi$. The
determinant becomes
\[
\beta_{2}^{2}\left[M(\boldsymbol{x}|\boldsymbol{\beta})\right]^{2}\prod_{i=1}^{3}\left[n_{i}g'(\psi(\mu_{i}))\left\{ \psi'(\mu_{i})\right\} ^{2}\right],
\]
with $M(\boldsymbol{x}|\boldsymbol{\beta})=(x_{1}x_{2})^{\beta_{3}}\log(x_{2}/x_{1})-(x_{1}x_{3})^{\beta_{3}}\log(x_{3}/x_{1})+(x_{2}x_{3})^{\beta_{3}}\log(x_{3}/x_{2})$,
$\mu_{i}=\beta_{1}+\beta_{2}x_{i}^{\beta_{3}},$ and $\psi'$ the
derivative of $\psi$. If we now define the function $\tilde{g}$
through its derivative $\tilde{g}'(\mu)=g'(\psi(\mu))[\psi'(\mu)]^{2}$,
the second derivative of function $\tilde{g}$ would become $\tilde{g}''(\mu)=g''(\psi(\mu))[\psi'(\mu)]^{3}+2g'(\psi(\mu))\psi''(\mu)\psi'(\mu)$.
Based on the proof of Theorem \ref{th-x1}, $x_{1}^{\mathrm{opt}}$
should be chosen equal to $L$ when $\tilde{g}'(\mu)=g'(\psi(\mu))[\psi'(\mu)]^{2}$
is non-negative and decreasing in $\mu$ (i.e., $g''(\psi(\mu))[\psi'(\mu)]^{3}+2g'(\psi(\mu))\psi''(\mu)\psi'(\mu)\leq0$).
If we also have that $\mu\tilde{g}''(\mu)+2\tilde{g}'(\mu)\geq0$,
$x_{3}^{\mathrm{opt}}$ should be chosen equal to $U$. The optimal
stimuli $x_{2}^{\mathrm{opt}}$ should satisfy equation (\ref{eq:optimal-x2})
with $g'(\mu_{2})$ and $g''(\mu_{2})$ replaced by $\tilde{g}'(\mu_{2})$
and $\tilde{g}''(\mu_{2})$, respectively.

To illustrate these results, let's assume we would like to study the
square root transformation $\psi(\mu)=\sqrt{\mu}$ of the Mitscherlich
function $\mu=\beta_{1}+\beta_{2}x^{\beta_{3}}$ and assume that the
canonical link function is equal to $g(\mu)=\log(\mu)$, with $\mu>0$.
Then the function $\tilde{g}'(\mu)=0.25\mu^{-3/2}$ is a positive
decreasing function and $\tilde{g}''(\mu)=-0.375\mu^{-5/2}$ is negative
for all $\mu>0.$ Furthermore, the condition $\mu\tilde{g}''(\mu)+2\tilde{g}'(\mu)$
is equal to $0.125\mu^{-3/2}$ and positive for all $\mu>0$. Thus
the combination $g(\mu)=\log(\mu)$ and $\psi(\mu)=\sqrt{\mu}$ results
into $x_{1}^{\mathrm{opt}}=L$, $x_{3}^{\mathrm{opt}}=U$, and $x_{2}^{\mathrm{opt}}$
can be obtained from
\[
\begin{split}-0.75\beta_{2}\beta_{3}\left[(Lx_{2})^{\beta_{3}}\log(\tfrac{x_{2}}{L})-(LU)^{\beta_{3}}\log(\tfrac{U}{L})+(x_{2}U)^{\beta_{3}}\log(\tfrac{U}{x_{2}})\right]\\
+\mu_{2}\left[\beta_{3}L^{\beta_{3}}\log(\tfrac{x_{2}}{L})+\beta_{3}U^{\beta_{3}}\log(\tfrac{U}{x_{2}})+L^{\beta_{3}}-U^{\beta_{3}}\right]=0.
\end{split}
\]
On the other hand, when we would like to study the exponential transformation
$\psi(\mu)=\exp\{\mu\}$ and the canonical link function is still
$g(\mu)=\log(\mu)$, with $\mu>0$, we do not satisfy the criteria.
The function $\tilde{g}'(\mu)=\exp\{\mu\}$ is still positive, but
it is clearly not a decreasing function, since the derivative $\tilde{g}''(\mu)=\exp\{\mu\}$
is always positive. Thus D-optimal designs for the combination $\psi(\mu)=\exp\{\mu\}$
and $g(\mu)=\log(\mu)$ may be different from what Theorems \ref{th-x1},
\ref{th-x3}, and \ref{th-x2} seem to indicate. Thus our theorems
only apply to certain combinations.

\section{Summary and Discussion}

In this paper, we determined D-optimal designs for responses having
their distribution in the exponential family and their mean equal
to the three-parameter Mitscherlich non-linear function, Transformations
of the Mitscherlich function are possible too, but only under certain
conditions. The D-optimal criterion is independent of estimation of
the dispersion parameter if the precision of the estimation of the
dispersion parameter is independent of the parameters of the Mitscherlich
function, a condition that holds for all well-known distribution functions.
It would be interesting to know if there exists an example within
our formulation of the exponential family for which this condition
does not hold.

It was demonstrated that the canonical link function plays an important
role in selecting the optimal values of the three stimuli. For most
distribution functions we should choose the first stimulus as small
as possible and the third stimulus as large as possible, but for the
inverse Gaussian distribution the third stimulus can be substantially
smaller than the maximum allowable stimulus. The middle stimulus depends
on the parameters, the optimal first and third stimulus, and the canonical
link function. For the Binomial distribution, the conditions on the
canonical (logit) link function in our theory may also not always
be satisfied. Hence, the canonical link function has a strong effect
on how to choose the stimuli (as we illustrated).

We showed that our results are an extension of earlier results \cite{key-1},
and that their approach of linearization of the Mitscherlich function
can be extended easily by including weights. Our D-optimality criterion
is identical to the D-optimality criterion of a weighted linear regression
problem, where the weights are the traditional inverse variances of
the response at the selected stimuli. However, when the residual variance
would be heterogeneous, linearization of the Mitscherlich function
does not lead to a D-optimal design anymore.

We believe that the Mitscherlich non-linear function has not received
enough attention, while we believe it is a very useful stimulus-response
function for validation studies of measurement systems. The Mitscherlich
function is an extension of the two-parameter linear or log-linear
regression function and therefore useful to investigate linearity
of the system. Our results may help formulate an optimal design for
maximizing the precision of the estimators of the parameters of the
Mitscherlich function and then evaluate linearity of the measurement
system.

More research in the future can be done to investigate how changing the form of the model would affect the D-optimum points. For example, for testing linearity
we may require alternative optimal designs, since two different models
are being compared that would not have the same D-optimal design.
Secondly, it would be of interest to determine the optimal settings
in case more than three stimuli are being selected, for instance to
test the goodness-of-fit of the Mitscherlich function. Thirdly, more
work is needed to understand the optimal designs for transformations
that may not satisfy our conditions. Finally, we believe that our
work may be extended to other non-linear functions that have similar
characteristics as the Mitscherlich function.

\section*{Acknowledgments}

This work is part of the research program Rapid Micro Statistics with
project number 15990, which is (partly) financed by the Netherlands
Organization for Scientific Research (NWO). The authors gratefully
acknowledge the support of the user committee and the funding organization.

\section*{Appendix A}

Here we will determine the Fisher information matrix for the parameters
$\boldsymbol{\beta}$, using the second derivatives of the log likelihood
function. An explicit formula for determinant $|I_{3\times3}(\boldsymbol{\beta}$)|
in (\ref{eq:D}) can then be determined using for instance Matlab.

The second derivative $\partial^{2}\ell(\boldsymbol{\beta},\phi{\rm |}\boldsymbol{y})/(\partial\beta_{k})^{2}$
of the log likelihood function in (\ref{eq:likelihood}) with respect
to $\beta_{k}$ is given by
\[
-\tfrac{1}{a(\phi)}\sum\limits _{i=1}^{m}\left[n_{i}g'(\mu_{i})\left(\tfrac{\partial\mu_{i}}{\partial\beta_{k}}\right)^{2}-\left\{ g''(\mu_{i})\left(\tfrac{\partial\mu_{i}}{\partial\beta_{k}}\right)^{2}+g'(\mu_{i})\tfrac{\partial^{2}\mu_{i}}{(\partial\beta_{k})^{2}}\right\} \left(y_{i.}-n_{i}\mu_{i}\right)\right].
\]
Using Theorem 1.1 on page 406 of \cite{key-32}, we obtain that the
variance of the score function $\ell_{\beta_{k}}^{\prime}$ is equal
to $\mathrm{\mathsf{VAR}}(\ell_{\beta_{k}}^{\prime})=-\mathbb{E}[\partial^{2}\ell(\boldsymbol{\beta},\phi{\rm |}\boldsymbol{y})/(\partial\beta_{k})^{2}]$.
This leads to the first variance in (\ref{eq:Fisher-Information}),
since $\mathbb{E}[y_{i.}-n_{i}\mu_{i}]=0$.

The second derivative $\partial^{2}\ell(\boldsymbol{\beta},\phi{\rm |}\boldsymbol{y})/(\partial\phi)^{2}$
of the log likelihood function in (\ref{eq:likelihood}) with respect
to $\phi$ is given by
\[
-\sum\limits _{i=1}^{m}\sum\limits _{j=1}^{n_{i}}\left[\tfrac{a''(\phi)a(\phi)-2[a'(\phi)]^{2}}{a^{3}(\phi)}\left(y_{ij}g(\mu_{i})-b(g(\mu_{i}))\right)-c''(y_{ij},\phi)\right],
\]
with $c''(\cdot,\cdot)$ the second derivative with respect to the
second argument. Using again Theorem 1.1 on page 406 of \cite{key-32},
the variance of the score function $\ell_{\phi}^{\prime}$ is equal
to $\mathsf{VAR}(\ell_{\phi}^{\prime})=-\mathbb{E}[\partial^{2}\ell(\boldsymbol{\beta},\phi{\rm |}\boldsymbol{y})/(\partial\phi)^{2}]$.
Since $\mathrm{\mathbb{E}}c'(y_{ij},\phi)=a'(\phi)\mathbb{E}[y_{ij}g(\mu_{i})-b(g(\mu_{i}))]/a^{2}(\phi)$,
with $c'(y_{ij},\phi)=\partial c(y_{ij},\phi)/\partial\phi$, the
variance $\mathsf{VAR}(\ell_{\phi}^{\prime})$ becomes equal to the
second equation in (\ref{eq:Fisher-Information}).

The second derivative $\partial^{2}\ell(\boldsymbol{\beta},\phi{\rm |}\boldsymbol{y})/(\partial\beta_{r}\partial\beta_{s})$
of the log likelihood function in (\ref{eq:likelihood}) with respect
to $\beta_{r}$ and $\beta_{s}$, with $r\neq s$, is given by
\[
\tfrac{1}{a(\phi)}\sum_{i=1}^{m}\left[-n_{i}g'(\mu_{i})\tfrac{\partial\mu_{i}}{\partial\beta_{r}}\tfrac{\partial\mu_{i}}{\partial\beta_{s}}+(y_{i.}-n_{i}\mu_{i})\left(g''(\mu_{i})\tfrac{\partial\mu_{i}}{\partial\beta_{r}}\tfrac{\partial\mu_{i}}{\partial\beta_{s}}+g'(\mu_{i})\tfrac{\partial^{2}\mu_{i}}{\partial\beta_{r}\partial\beta_{s}}\right)\right].
\]
Using the same Theorem 1.1 on page 406 of \cite{key-32}, the covariance
of the score functions $\ell_{\beta_{r}}^{\prime}$ and $\ell_{\beta_{s}}^{\prime}$
is equal to $\mathsf{COV}(\ell_{\beta_{r}}^{\prime},\ell_{\beta_{s}}^{\prime})=-\mathbb{E}[\partial^{2}\ell(\boldsymbol{\beta},\phi{\rm |}\boldsymbol{y})/(\partial\beta_{r}\partial\beta_{s})]$.
Using again that $\mathbb{E}[y_{i.}-n_{i}\mu_{i}]=0$, the covariance
$\mathsf{COV}(\ell_{\beta_{r}}^{\prime},\ell_{\beta_{s}}^{\prime})$
becomes equal to the third equation in (\ref{eq:Fisher-Information}).

The second derivative $\partial^{2}\ell(\boldsymbol{\beta},\phi{\rm |}\boldsymbol{y})/(\partial\beta_{k}\partial\phi)$
of the log likelihood function in (\ref{eq:likelihood}) with respect
to $\beta_{k}$ and $\phi$ is given by
\[
\tfrac{-a'(\phi)}{a^{2}(\phi)}\sum_{i=1}^{m}\left[\left(y_{i.}-n_{i}\mu_{i}\right)g'(\mu_{i})\tfrac{\partial\mu_{i}}{\partial\beta_{k}}\right].
\]
Using the Theorem 1.1 on page 406 of \cite{key-32}, the covariance
of the score functions $\ell_{\beta_{k}}^{\prime}$ and $\ell_{\phi}^{\prime}$
is equal to $\mathsf{COV}(\ell_{\beta_{k}}^{\prime},\ell_{\phi}^{\prime})=-\mathbb{E}[\partial^{2}\ell(\boldsymbol{\beta},\phi{\rm |}\boldsymbol{y})/(\partial\beta_{k}\partial\phi)]$.
Since $\mathbb{E}y_{i.}=n_{i}\mu_{i}$, the covariance $\mathsf{COV}(\ell_{\beta_{k}}^{\prime},\ell_{\phi}^{\prime})$
becomes equal to zero.

\section*{Appendix B}

Here we will provide the proofs of Theorems \ref{th-x1}, \ref{th-x3},
and \ref{th-x2}.

\subsection*{Proof of Theorem \ref{th-x1}}

We may assume that $n_{1}=n_{2}=n_{3}=1$ without loss of generality
and introduce $z_{i}=x_{i}^{\beta_{3}}$ with $\beta_{3}>0$. The
determinant $|I_{3\times3}(\boldsymbol{\beta})|$ can now be written
as
\[
|I_{3\times3}(\boldsymbol{\beta})|=\beta_{3}^{-2}\beta_{2}^{2}g'(\beta_{1}+\beta_{2}z_{1})g'(\beta_{1}+\beta_{2}z_{2})g'(\beta_{1}+\beta_{2}z_{3})[h(z_{1}|z_{2},z_{3})]^{2},
\]

with function $h(z_{1}|z_{2},z_{3})=z_{1}z_{2}\log(\frac{z_{2}}{z_{1}})-z_{1}z_{3}\log(\frac{z_{3}}{z_{1}})+z_{2}z_{3}\log(\frac{z_{3}}{z_{2}})$.
We will demonstrate that the determinant $|I_{3\times3}(\boldsymbol{\beta})|$
is a decreasing function of $z_{1}\in[0,z_{2})$ for any value of
$z_{2}>0$ and $z_{3}>z_{2}$, which proves that we should choose
$x_{1}$ as small as possible.

Since we assumed that $g''(\mu)\leq0$, the function $g'(\mu)$ is
a non-increasing function in $\mu$. Since $\mu_{1}$ is an increasing
function in $z_{1}$ ($\beta_{2}>0$), we have demonstrated that $g'(\mu_{1})$
is a non-increasing function in $z_{1}$ and thus in $x_{1}$. We
will now demonstrate that $h(z_{1}|z_{2},z_{3})$ is decreasing in
$z_{1}$ by showing that $\partial h(z_{1}|z_{2},z_{3})/\partial z_{1}\leq0$
for all $z_{1}\in[0,z_{2})$.

The derivative of $h(z_{1}|z_{2},z_{3})$ with respect to $z_{1}$
is given by
\[
h'(z_{1}|z_{2},z_{3})=(z_{3}-z_{2})\log(z_{1})+z_{2}(\log(z_{2})-1)-z_{3}(\log(z_{3})-1),
\]
which is an increasing function in $z_{1}$. It is clear that $\lim_{z_{1}\downarrow0}h'(z_{1}|z_{2},z_{3})=-\infty$,
that $h'(z_{1}|z_{2},z_{3})=0$ at $z_{1}=z_{1}^{0}$ with 
\[
\log(z_{1}^{0})=\frac{z_{3}(\log(z_{3})-1)-z_{2}(\log(z_{2})-1)}{z_{3}-z_{2}},
\]
and that $h'(z_{1}|z_{2},z_{3})$ is negative for $z_{1}\in[0,z_{1}^{0})$.
If we can show that $\log(z_{1}^{0})\ge\log(z_{2})$, then we can
conclude that $h(z_{1}|z_{2},z_{3})$ is a decreasing function on
the interval $[0,z_{2})$. Inequality $\log(z_{1}^{0})\ge\log(z_{2})$
is identical to $z_{3}(\log(z_{3})-\log(z_{2}))\ge z_{3}-z_{2}$,
using standard algebra. If we now choose $z_{2}=az_{3}$, with $0<a<1$,
inequality $\log(z_{1}^{0})\ge\log(z_{2})$ results in inequality
$a-\log(a)\ge1.$ Since $a-\log(a)$ is a decreasing function for
$a\in(0,1)$ and $a-\log(a)$ is equal to one for $a=1$, we have
demonstrated that $z_{1}^{0}>z_{2}$ and that $h(z_{1}|z_{2},z_{3})$
is a decreasing function in $z_{1}$.

Furthermore, $h(z_{2}|z_{2},z_{3})$ is equal to zero, which means
that $h(z_{1}|z_{2},z_{3})>0$ for $z_{1}\in[0,z_{2})$ and hence
$[h(z_{1}|z_{2},z_{3})]^{2}$ is a decreasing function in $z_{1}$
on the interval $[0,z_{2})$. This implies that $g'(\beta_{1}+\beta_{2}z_{1})[h(z_{1}|z_{2},z_{3})]^{2}$
is a decreasing function in $z_{1}$ on the interval $[0,z_{2})$
and thus $|I_{3\times3}(\boldsymbol{\beta})|$ is a decreasing function
in $x_{1}$ on the interval $[0,x_{2})$.

\subsection*{Proof of Theorem \ref{th-x3}}

Again we assume that $n_{1}=n_{2}=n_{3}=1$ and introduce $z_{i}=x_{i}^{\beta_{3}}$
with $\beta_{3}>0$. The determinant $|I_{3\times3}(\boldsymbol{\beta})|$
can now be written as
\[
|I_{3\times3}(\boldsymbol{\beta})|=\beta_{3}^{-2}\beta_{2}^{2}g'(\beta_{1}+\beta_{2}z_{1})g'(\beta_{1}+\beta_{2}z_{2})g'(\beta_{1}+\beta_{2}z_{3})[h(z_{3}|z_{1},z_{2})]^{2},
\]

with function $h(z_{3}|z_{1},z_{2})=z_{1}z_{2}\log(\frac{z_{2}}{z_{1}})-z_{1}z_{3}\log(\frac{z_{3}}{z_{1}})+z_{2}z_{3}\log(\frac{z_{3}}{z_{2}})$.
We will demonstrate that determinant $|I_{3\times3}(\boldsymbol{\beta})|$
is an increasing function in $z_{3}\in(z_{2},\infty)$, under the
stated conditions of Theorem \ref{th-x3} for any value of $z_{1}\geq0$
and $z_{2}>z_{1}$, which proves that we should choose $x_{3}$ as
large as possible. To do this, we may just study the product function
$D(z_{3})=g'(\beta_{1}+\beta_{2}z_{3})[h(z_{3}|z_{1},z_{2})]^{2}$,
since all other elements in $|I_{3\times3}(\boldsymbol{\beta})|$
are positive constants with respect to $z_{3}$.

If we denote $h'(z_{3}|z_{1},z_{2})$ as the derivative of $h(z_{3}|z_{1},z_{2})$
with respect to $z_{3}$ and define $C(\mu)=g''(\mu)\mu+2g'(\mu)$,
the derivative of $D(z_{3})$ with respect to $z_{3}$ can be written
as
\begin{equation}
\begin{array}{rcl}
\frac{\partial D(z_{3})}{\partial z_{3}} & = & \beta_{2}g''(\mu_{3})\left[h(z_{3}|z_{1},z_{2})\right]^{2}+2g'(\mu_{3})h(z_{3}|z_{1},z_{2})h'(z_{3}|z_{1},z_{2})\\
 & = & \frac{\left[h(z_{3}|z_{1},z_{2})\right]^{2}}{z_{3}}\left[g''(\mu_{3})\beta_{2}z_{3}+2g'(\mu_{3})\frac{z_{3}h'(z_{3}|z_{1},z_{2})}{h(z_{3}|z_{1},z_{2})}\right]\\
 & = & \frac{\left[h(z_{3}|z_{1},z_{2})\right]^{2}}{z_{3}}\left[g''(\mu_{3})\mu_{3}-\beta_{1}g''(\mu_{3})+2g'(\mu_{3})\frac{z_{3}h'(z_{3}|z_{1},z_{2})}{h(z_{3}|z_{1},z_{2})}\right]\\
 & = & \frac{\left[h(z_{3}|z_{1},z_{2})\right]^{2}}{z_{3}}\left[C(\mu_{3})-\beta_{1}g''(\mu_{3})+2g'(\mu_{3})\left(\frac{z_{3}h'(z_{3}|z_{1},z_{2})}{h(z_{3}|z_{1},z_{2})}-1\right)\right]
\end{array}
\end{equation}

Based on the assumptions of Theorem \ref{th-x3}, we have that $C(\mu_{3})\geq0$,
$-\beta_{1}g''(\mu_{3})\geq0$, and $2g'(\mu_{3})\geq0$. Furthermore,
the term $\left[h(z_{3}|z_{1},z_{2})\right]^{2}/z_{3}$ is non-negative
for all $z_{3}$. If we can now demonstrate that the term $z_{3}h'(z_{3}|z_{1},z_{2})/h(z_{3}|z_{1},z_{2})\geq1$,
we have demonstrated that the derivative $\partial D(z_{3})/\partial z_{3}$
is non-negative, indicating that $D(z_{3})$ is increasing.

If we rewrite $h(z_{3}|z_{1},z_{2})$ into $h(z_{3}|z_{1},z_{2})=A_{1}+A_{2}z_{3}\log(z_{3})+A_{3}z_{3}$,
with $A_{1}=z_{1}z_{2}\log(\frac{z_{2}}{z_{1}})$, $A_{2}=z_{2}-z_{1}$,
and $A_{3}=z_{1}\log(z_{1})-z_{2}\log(z_{2})$, the derivative of
$h(z_{3}|z_{1},z_{2})$ with respect to $z_{3}$ is given by $h'(z_{3}|z_{1},z_{2})=A_{2}+A_{2}\log(z_{3})+A_{3}$.
Since $A_{2}>0$, $h'(z_{3}|z_{1},z_{2})$ is an increasing function
with $\lim_{z_{3}\rightarrow\infty}h'(z_{3}|z_{1},z_{2})=\infty$.
The function $h'(z_{3}|z_{1},z_{2})$ is equal to zero in $z_{3}=z_{3}^{0}$,
with
\[
\log(z_{3}^{0})=-\frac{A_{3}+A_{2}}{A_{2}}=\frac{z_{2}\log(z_{2})-z_{1}\log(z_{1})}{z_{2}-z_{1}}-1.
\]
If we can demonstrate that $\log(z_{3}^{0})\leq\log(z_{2})$, we would
obtain that $h'(z_{3}|z_{1},z_{2})>0$ for $z_{3}\in(z_{2},\infty)$,
and thus $h(z_{3}|z_{1},z_{2})$ is an increasing function. Since
$h(z_{2}|z_{1},z_{2})=0$, $h(z_{3}|z_{1},z_{2})$ would also be positive
on $(z_{2},\infty)$.

If we now choose $z_{1}=az_{2}$, with $a\in[0,1)$, the solution
$\log(z_{3}^{0})$ is equal to $[\log(z_{2})-a\log(z_{2})-a\log(a)-1+a]/[1-a]$.
Then inequality $\log(z_{3}^{0})\leq\log(z_{2})$ results in $a-a\log(a)\leq1$.
The function $a-a\log(a)$ is an increasing function in $a\in[0,1)$,
since its derivative is equal to $-\log(a)$, and it is equal to one
when $a=1$. Thus inequality $\log(z_{3}^{0})\leq\log(z_{2})$ is
guaranteed.

Knowing that $h(z_{3}|z_{1},z_{2})>0$ for $z_{3}\in(z_{2},\infty)$,
we can see that inequality $z_{3}h'(z_{3}|z_{1},z_{2})/h(z_{3}|z_{1},z_{2})\geq1$
is equivalent to the following inequality
\[
\begin{array}{rcl}
z_{3}h'(z_{3}|z_{1},z_{2})/h(z_{3}|z_{1},z_{2})\geq1 & \Longleftrightarrow & A_{2}z_{3}-A_{1}\ge0\\
 & \Longleftrightarrow & z_{3}\geq z_{1}z_{2}\log(z_{2}/z_{1})/[z_{2}-z_{1}]
\end{array}
\]
If we can prove that $z_{1}z_{2}\log(z_{2}/z_{1})/[z_{2}-z_{1}]$
is smaller than or equal to $z_{2}$, we have demonstrated that $z_{3}h'(z_{3}|z_{1},z_{2})/h(z_{3}|z_{1},z_{2})\geq1$
holds for $z_{3}\in(z_{2},\infty)$. If we again take $z_{1}=az_{2}$,
with $a\in[0,1)$, we obtain that $z_{1}z_{2}\log(z_{2}/z_{1})/[z_{2}-z_{1}]=-az_{2}\log(a)/[1-a]$
and this function is smaller or equal to $z_{2}$ when $-a\log(a)/[1-a]\leq1$.
This results again in $a-a\log(a)\leq1$, which we already demonstrated
to be true.

Thus we have finally shown that $\partial D(z_{3})/\partial z_{3}>0$
under the stated conditions of Theorem \ref{th-x3}, making the determinant
$|I_{3\times3}(\boldsymbol{\beta})|$ an increasing function in $z_{3}$
on the interval $(z_{2},\infty)$.

\subsection*{Proof of Theorem \ref{th-x2}}

We start again with the assumption that $n_{1}=n_{2}=n_{3}=1$ and
we introduce $z_{i}=x_{i}^{\beta_{3}}$ with $\beta_{3}>0$. The determinant
$|I_{3\times3}(\boldsymbol{\beta})|$ can now be written as
\[
|I_{3\times3}(\boldsymbol{\beta})|=\beta_{3}^{-2}\beta_{2}^{2}g'(\beta_{1}+\beta_{2}z_{1})g'(\beta_{1}+\beta_{2}z_{2})g'(\beta_{1}+\beta_{2}z_{3})[h(z_{2}|z_{1},z_{3})]^{2},
\]

with function $h(z_{2}|z_{1},z_{3})=z_{1}z_{2}\log(\frac{z_{2}}{z_{1}})-z_{1}z_{3}\log(\frac{z_{3}}{z_{1}})+z_{2}z_{3}\log(\frac{z_{3}}{z_{2}})$.
We will now study $D(z_{2})=g'(\beta_{1}+\beta_{2}z_{2})[h(z_{2}|z_{1},z_{3})]^{2}$
as function of $z_{2}$ in the interval $(z_{1},z_{3})$, since the
remaining part of the determinant is just a constant.

Function $h(z_{2}|z_{1},z_{3})$ is rewritten into $h(z_{2}|z_{1},z_{3})=-A_{1}z_{2}\log(z_{2})+A_{2}z_{2}-A_{3}$,
with $A_{1}=z_{3}-z_{1}$, $A_{2}=z_{3}\log(z_{3})-z_{1}\log(z_{1})$,
and $A_{3}=z_{1}z_{3}\log(z_{3}/z_{1})$. The terms $A_{1}$ and $A_{3}$
are both positive when $z_{3}>z_{1}\geq0$, but the term $A_{2}$
is only positive when we may assume that $z_{3}\geq1$. Note that
$h(z_{1}|z_{1},z_{3})=h(z_{3}|z_{1},z_{3})=0$ and the derivative
$h'(z_{2}|z_{1},z_{3})=\partial h(z_{2}|z_{1},z_{3})/\partial z_{2}$
is equal to $A_{2}-A_{1}-A_{1}\log(z_{2})$, which is a decreasing
function in $z_{2}$. The solution $z_{2}^{0}$ of $h'(z_{2}|z_{1},z_{3})=0$
is unique and satisfies $\log(z_{2}^{0})=[A_{2}-A_{1}]/A_{1}$. If
we can demonstrate that $\log(z_{1})<\log(z_{2}^{0})<\log(z_{3})$,
we would know that $h(z_{2}|z_{1},z_{3})$ is increasing on interval
$z_{2}\in(z_{1},z_{2}^{0})$ and decreasing on interval $z_{2}\in(z_{2}^{0},z_{3})$
and thus always positive on $z_{2}\in(z_{1},z_{3})$.

Inequality $\log(z_{1})<\log(z_{2}^{0})$ is equivalent to inequality
$z_{3}-z_{1}<z_{3}[\log(z_{3})-\log(z_{1})]$. If we choose $z_{1}=az_{3}$,
with $a\in[0,1)$, the inequality becomes $1-a+\log(a)<0$, which
holds for $a\in[0,1)$, since $1-a+\log(a)$ is an increasing function
on interval $a\in[0,1)$ with $\lim_{a\rightarrow1}[1-a+\log(a)]=0$.
Inequality $\log(z_{2}^{0})<\log(z_{3})$ is equivalent to inequality
$z_{1}[\log(z_{3})-\log(z_{1})]<(z_{3}-z_{1})$. If we choose $z_{1}=az_{3}$,
with $a\in[0,1)$, the inequality becomes $1-a+a\log(a)>0$, which
holds for $a\in[0,1)$, since $1-a+a\log(a)$ is a decreasing function
on interval $a\in[0,1)$ with $\lim_{a\rightarrow1}[1-a+a\log(a)]=0$.
Thus this proves $\log(z_{1})<\log(z_{2}^{0})<\log(z_{3})$.

Maximizing $D(z_{2})$ in $z_{2}$, can be done by setting the derivative
equal to zero. Since $h(z_{2}|z_{1},z_{3})$ is positive on $z_{2}\in(z_{1},z_{3})$,
setting the derivative $\partial D(z_{2})/\partial z_{2}$ equal to
zero leads to the following equality
\begin{equation}
\beta_{2}g''(\mu_{2})h(z_{2}|z_{1},z_{3})+2g'(\mu_{2})h'(z_{2}|z_{1},z_{3})=0,\label{eq:opt-z2}
\end{equation}
which is identical to equation (\ref{eq:optimal-x2}). We now need
to demonstrate that equation (\ref{eq:opt-z2}) has at least one solution
for a value of $z_{2}\in(z_{1},z_{3})$. Rewriting equation (\ref{eq:opt-z2}),
leads to $\beta_{2}g''(\mu_{2})/g'(\mu_{2})=-2h'(z_{2}|z_{1},z_{3})/h(z_{2}|z_{1},z_{3})$.
The left-hand side is smaller or equal to zero for any $z_{2}$, while
the right-hand side is negative for $z_{2}\in(z_{1},z_{2}^{0})$,
zero at $z_{2}=z_{2}^{0}$, and positive for $z_{2}\in(z_{2}^{0},z_{3})$,
using the results on $h'(z_{2}|z_{1},z_{3})$ and $h(z_{2}|z_{1},z_{3})$
above. Since we also have that $\lim_{z_{2}\downarrow z_{1}}-2h'(z_{2}|z_{1},z_{3})/h(z_{2}|z_{1},z_{3})=-\infty$,
we now know that a solution must occur for $z_{2}\in(z_{1},z_{2}^{0}]$,
thereby satisfying the constraint in Theorem \ref{th-x2}.

\section*{Appendix C}

Here we will assume that $y_{ij}\sim N(\mu_{i},\sigma^{2}\varphi(\mu_{i}))$
is normally distributed with a mean $\mu_{i}$ equal to the Mitscherlich
function $\mathbb{E}(y_{ij}|x_{i})\equiv\mu_{i}=\beta_{1}+\beta_{2}x_{i}^{\beta_{3}}$,
and with $\beta_{i}>0$. We will provide the elements of the Fisher
information matrix for estimation of $\boldsymbol{\theta}^{T}=(\beta_{1},\beta_{2},\beta_{3},\sigma^{2})$.
The log likelihood function is given by
\[
\ell(\boldsymbol{\theta}|\boldsymbol{y})=-\tfrac{1}{2}\sum_{i=1}^{m}\sum_{j=1}^{n_{i}}\left[\log(2\pi)+\log(\sigma^{2})+\log(\varphi(\mu_{i}))+(y_{ij}-\mu_{i})^{2}/(\sigma^{2}\varphi(\mu_{i}))\right],
\]
with $\boldsymbol{y}=(\boldsymbol{y}_{1},\boldsymbol{y}_{2},...,\boldsymbol{y}_{m})^{T}$
and $\boldsymbol{y}_{i}=(y_{i1},y_{i2},...,y_{in_{i}})^{T}.$ The
four score functions $\ell_{\beta_{k}}^{\prime}=\partial\ell(\boldsymbol{\theta}|\boldsymbol{y})/\partial\beta_{k}$,
$k\in\{1,2,3\}$, and $\ell_{\sigma^{2}}^{\prime}=\partial\ell(\boldsymbol{\theta}|\boldsymbol{y})/\partial(\sigma^{2})$
are now given by
\[
\begin{array}{rcl}
\ell_{\beta_{k}}^{\prime} & = & -\frac{1}{2}\sum\limits _{i=1}^{m}\sum\limits _{j=1}^{n_{i}}\left[\tfrac{\varphi'(\mu_{i})}{\varphi(\mu_{i})}-2\tfrac{y_{ij}-\mu_{i}}{\sigma^{2}\varphi(\mu_{i})}-\tfrac{\varphi'(\mu_{i})(y_{ij}-\mu_{i})^{2}}{[\sigma\varphi(\mu_{i})]^{2}}\right]\frac{\partial\mu_{i}}{\partial\beta_{k}},\\
\ell_{\sigma^{2}}^{\prime} & = & -\frac{1}{2}\sum\limits _{i=1}^{m}\sum\limits _{j=1}^{n_{i}}\left[\tfrac{1}{\sigma^{2}}-\tfrac{(y_{ij}-\mu_{i})^{2}}{\sigma^{4}\varphi(\mu_{i})}\right].
\end{array}
\]
After tedious algebraic calculations using the score functions directly,
the variances and covariances of the score functions can be calculated.
They are given by 
\[
\begin{array}{rcl}
\mathsf{VAR}(\ell_{\beta_{k}}^{\prime}) & = & \sum\limits _{i=1}^{m}n_{i}\left[\tfrac{1}{2}\left(\tfrac{\varphi'(\mu_{i})}{\varphi(\mu_{i})}\right)^{2}+\tfrac{1}{\sigma^{2}\varphi(\mu_{i})}\right]\left(\frac{\partial\mu_{i}}{\partial\beta_{k}}\right)^{2},\\
\mathsf{VAR}(\ell_{\sigma^{2}}^{\prime}) & = & \frac{1}{2\sigma^{4}}\sum\limits _{i=1}^{m}n_{i},\\
\mathsf{COV}(\ell_{\beta_{r}}^{\prime},\ell_{\beta_{s}}^{\prime}) & = & \sum\limits _{i=1}^{m}n_{i}\left[\tfrac{1}{2}\left(\tfrac{\varphi'(\mu_{i})}{\varphi(\mu_{i})}\right)^{2}+\tfrac{1}{\sigma^{2}\varphi(\mu_{i})}\right]\frac{\partial\mu_{i}}{\partial\beta_{r}}\frac{\partial\mu_{i}}{\partial\beta_{s}},\\
\mathsf{COV}(\ell_{\beta_{k}}^{\prime},\ell_{\sigma^{2}}^{\prime}) & = & \frac{1}{2\sigma^{2}}\sum\limits _{i=1}^{m}n_{i}\left[\tfrac{\varphi'(\mu_{i})}{\varphi(\mu_{i})}\right]\frac{\partial\mu_{i}}{\partial\beta_{k}}.
\end{array}
\]
Since we have $\partial\mu_{i}/\partial\beta_{1}=1$, $\partial\mu_{i}/\partial\beta_{2}=x_{i}^{\beta_{3}}$,
and $\partial\mu_{i}/\partial\beta_{3}=\beta_{2}x_{i}^{\beta_{3}}\log(x_{i})$,
we obtain that the matrix $I_{3\times3}(\boldsymbol{\beta})$ is given
by (\ref{eq:WI_3}).

\end{document}